\documentclass[12pt]{article}
\usepackage[margin=1.25in]{geometry}
\usepackage[english]{babel}
\setcounter{tocdepth}{2} 
\usepackage[nottoc]{tocbibind} 
\usepackage{etoolbox}
\usepackage{textcomp}
\usepackage{amsmath}
\usepackage{amsthm}
\usepackage{thmtools}
\usepackage{amssymb}
\usepackage{bbm}
\usepackage{graphicx}
\usepackage{tikz}
\usepackage{mathtools}
\usepackage[shortlabels]{enumitem}
\usepackage{hyperref}
\usetikzlibrary{cd}
\usepackage{cleveref}

\makeatletter

\def\env@sqcases{%
  \let\@ifnextchar\new@ifnextchar
  \left\lbrack
  \def\arraystretch{1.2}%
  \array{@{}l@{\quad}l@{}}%
}
\makeatother

\setcounter{MaxMatrixCols}{20}

\hypersetup{
    colorlinks=true,
    linkcolor=blue,
    filecolor=magenta,
    pdftitle={Zhizhin 2nd Year Paper},
    }
    

\newcommand{\R}{\ensuremath{\mathbb R}}
\newcommand{\C}{\ensuremath{\mathbb C}}
\newcommand{\Z}{\ensuremath{\mathbb Z}}

\newcommand{\Q}{\ensuremath{\mathbb Q}}

\newcommand{\A}{\ensuremath{\mathbb A}}
\newcommand{\E}{\ensuremath{\mathcal E}}
\newcommand{\defeq}{\vcentcolon=}
\renewcommand{\P}{\ensuremath{\mathcal P}}
\renewcommand{\O}{\ensuremath{\mathcal{O}}}

\DeclareMathOperator{\rk}{rk}

\DeclareMathOperator{\Spec}{Spec}
\DeclareMathOperator{\id}{id}

\DeclareMathOperator{\codim}{codim}
\DeclareMathOperator{\Supp}{Supp}

\DeclareMathOperator{\Vol}{Vol}
\DeclareMathOperator{\MVol}{MVol}
\DeclareMathOperator{\Ker}{Ker}
\DeclareMathOperator{\Hom}{Hom}
\DeclareMathOperator{\Conv}{Conv}
\DeclareMathOperator{\GL}{GL}
\renewcommand{\char}{\ensuremath{\operatorname{char}}}

\newcommand{\citestacks}[1]{\cite[\href{https://stacks.math.columbia.edu/tag/#1}{Tag #1}]{stacks-project}}
            
\theoremstyle{definition}
\newtheorem{theorem}{\textbf{Theorem}}[section]
\newtheorem{example}[theorem]{\textbf{\sc{Example}}}
\newtheorem*{example*}{\textbf{\sc{Example}}}
\newtheorem{claim}[theorem]{\textbf{\sc{Claim}}}
\crefname{claim}{Claim}{Claims}
\newtheorem*{claim*}{\textbf{\sc{Claim}}}
\crefname{claim*}{Claim}{Claims}
\newtheorem{notation}[theorem]{\textbf{\sc{Notation}}}
\crefname{notation}{Notation}{Notations}
\newtheorem{lemma}[theorem]{\textbf{\sc{Lemma}}}
\crefname{lemma}{Lemma}{Lemmas}
\newtheorem{remark}[theorem]{\textbf{\sc{Remark}}}
\newtheorem{corollary}[theorem]{\textbf{\sc{Corollary}}}
\crefname{corollary}{Corollary}{Corollaries}

\theoremstyle{plain}
\newtheorem{definition}[theorem]{Definition}

\usepackage{sectsty}
\sectionfont{\centering\sc}
\makeatletter
\def\@seccntformat#1{\csname the#1\endcsname.\ }
\makeatother

\makeatletter
\let\@fnsymbol\@arabic
\makeatother

\addto\extrasenglish{

}

\title{
  \sc{Irreducibility of Toric Complete Intersections}
  }
\author{
  \sc{Andrey Zhizhin\thanks{National Research University Higher School of Economics, Russian Federation}}
  }
\date{July 2024\\
Moscow}
\begin{document}
\maketitle

\begin{abstract}    
    We develop an approach to study the irreducibility of generic complete intersections in the algebraic torus defined by equations with fixed monomials and fixed linear relations on coefficients. Using our approach we generalize the irreducibility theorems of Khovanskii from \cite{Khovanskii2016} to fields of arbitrary characteristic. Also we get a combinatorial sufficient conditions for irreducibility of engineered complete intersections (notion introduced in \cite{Esterov2024}). As an application we give a combinatorial condition of irreducibility for some critical loci and Thom-Bordmann strata:  $f = f'_x = 0$, $f'_x = f'_y = 0$, $f = f'_x = f'_{xx} = 0$, where f is a generic Laurent polynomial with a prescribed monomial set.
\end{abstract}
\pagebreak

\begin{center}
    {\bfseries Acknowledgements} 
\end{center}
I would like to thank my advisor Alexander Esterov for all the gentle guidance and peristent discussions that formed my whole understanding of the Newton Polytope Theory. I am also grateful to Vlad Safronov for reviewing and discussing some parts of my work which in particular resulted in a much more general formulation of \autoref{cor:ECI_simpler_poly}.
The study has been funded within the framework of the HSE University Basic Research Program.

\tableofcontents

\section{Introduction}
\subsection{To make long story short}
Let $k$ be an arbitrary field, $T^n \defeq \Spec k\left[ x_1^{\pm1}, \dots, x_n^{\pm 1} \right]$ be the algebraic $n$-dimensional torus over $k$, $f \in \Gamma(T^n, \O) =  k\left[ x_1^{\pm1}, \dots, x_n^{\pm 1} \right] $ be a Laurent polynomial in $n$ variables. Then we call the finite set
\[
\Supp f \defeq \left\{ x_1^{d_1} \cdot \hdots \cdot x_n^{d_n} \middle|\ c_{\mathbf d} \ne 0 \right\}, \text{ where } f = \sum_{\mathbf d \in \Z^n} c_{\mathbf d} x_1^{d_1} \cdot \hdots \cdot x_n^{d_n},
\]
{\itshape \bfseries the support set of the Laurent polynomial $f$}. The study of complete intersections in $T^n$ in terms of the support sets of the equations is usually called the Newton Polytope Theory. In this paper we address the question of irreducibility in this setting.

Denote by $M \simeq \Z^n$ the monomial lattice. For a finite subset $A \subset M$ consider $k^A \defeq \left\{ f \in \Gamma(T^n, \O)\ |\ \Supp f \subset A \right\}$ --- the space of polynomials supported at $A$. Then $k^{A_1} \times \cdots \times k^{A_m}$ could be viewed as the space of systems of equations such that the $i$-th equation is supported at $A_i$. Classical theorems by Khovanskii (cf. \cite{Khovanskii2016}) give a criterion for irreducibility of a variety defined by the general system of equations from $\C^{A_1} \times \dots \times \C^{A_m}$. In this text we call that criterion\footnote{technically the Khovnaskii condition is not a criterion of irreducibility, but rather a sufficient condition. However, Khovanskii in his work gives a precise criterion and we generalize his criterion to arbtrary charasteristic} the Khovanskii condition, cf. \autoref{def:khovanskii_condition}. We give a new proof for these results as well as generalize them to arbitrary characteristic in \autoref{sec:khovanskii}. 

However the main point of this work is to extend the Newton polytope theory beyond general systems of equations. Indeed, in the many topics where this theory is classically applied, one can see growing role of slightly degenerate systems of equations, in the following sense: some equations in the system are, as before, general linear combinations of given collections of monomials, but the others are obtained from them by e.g. permuting variables or taking partial derivatives. Systems of equations that are general modulo a symmetry occur e.g. in the study of nearly rational varieties (starting at least from examples in \cite{Bayle1994}) or in Galois theory (see \cite{Esterov2022}), which, further, surveys similar examples from several other topics, and gives a version of Khovanskii's irreducibility theorem for such general symmetric systems of equations).
We focus on another extension of the classical Newton polytope setting: general systems, in which some equations are partial derivatives of the others. This includes e.g. examples from enumerative geometry (see \cite{Esterov2013} and references therein) and polynomial optimization (see e.g. \cite{DHOBT}, \cite{BSW}, \cite{LNRW}). For instance, given a generic hypersurface defined by $f = 0$ with a prescribed Newton polytope, its coordinate projection has the critical locus $f = f'_x = 0$ and the higher Thom--Bordmann strata $f = f'_x = f'_{xx} = \dots =0$, which are important for enumerative singularity theory. This naturally extends further to so-called engineered complete intersections \cite{Esterov2024}, in which the coefficients of the equations are general modulo given linear dependencies among them. This generality includes many new interesting objects, such as hyperplane arrangement complements, generalized complete intersection Calabi--Yau varieties (studied since \cite{AAGGL}), and some other examples originating from mathematical physics (e.g. \cite{BMMT}). We extend Khovanskii's irreducibility theorem to such objects.

\paragraph{Results} The main goal we pursue is to study a new class of toric complete intersections, viz. the varieties defined by general systems from vector subspaces of $k^{A_1} \times \cdots \times k^{A_m}$, i.e. we allow fixed liner relations on coefficients. In the said generality we are able to give a rather abstract  condition, \autoref{thm:sufficient}. In the specific context of Engineered Complete Intersections\footnote{see \autoref{sssec:prelim_engineered} and \autoref{sec:ECI} or the original preprint \cite{Esterov2024}} we have an explicit combinatorial condition, \autoref{thm:ECI}. In particular, as a concrete application we get\footnote{\autoref{cor:ECI_simpler_poly}}:
\begin{claim*}
    Let $A$ be a finite set of Laurent monomials. Fix any function $l: A \to k$. Let $p_1, \dots, p_m \in k[T]$ be polynomials s.t. $\deg p_i = i - 1$. For $f = \sum_{\chi \in A} c_\chi \cdot \chi \in k^A$ define $g_i(f) \defeq \sum_\chi p_i(l(\chi)) c_\chi \cdot \chi$. If there are at least $m$ fibres of the function $l$ each of dimension\footnote{see \autoref{sssec:techincal_codim} for the formal definition of dimension} at least $m + 1$, then for the general $f \in k^A$ the system $g_1(f) = ~\cdots = ~g_m(f) = 0$ defines an irreducible variety.
\end{claim*}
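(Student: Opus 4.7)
The plan is to identify the system as a generic instance of an Engineered Complete Intersection in standard Vandermonde form, and then apply \autoref{thm:ECI}.

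First, I would partition the support along the fibres of $l$: for each $a \in l(A)$, set $A_a \defeq l^{-1}(a) \cap A$, so that $A = \bigsqcup_{a} A_a$ and $k^A = \prod_a k^{A_a}$ via $f \mapsto (f_a)_a$, where $f_a \defeq \sum_{\chi \in A_a} c_\chi \chi$. Under this decomposition,
\[
g_i(f) = \sum_{\chi \in A} p_i(l(\chi))\, c_\chi\, \chi = \sum_{a \in l(A)} p_i(a)\, f_a.
\]
Since $\deg p_i = i-1$, the family $\{p_1, \ldots, p_m\}$ is a basis of the space of polynomials of degree less than $m$, and an invertible linear change of basis on the equations $g_1, \ldots, g_m$ preserves their common zero locus. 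I may therefore replace the $p_i$ by the monomial basis $\{1, x, \ldots, x^{m-1}\}$ and work with the equivalent system $\tilde g_i(f) = \sum_a a^{i-1} f_a = 0$ for $i = 1, \ldots, m$. This is the generic instance of an ECI whose blocks are the fibres $A_a$ and whose coefficient matrix is the (generalised) Vandermonde matrix $M = (a^{i-1})_{i,a}$.

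Finally, I would invoke \autoref{thm:ECI}. Two inputs should feed into its combinatorial hypothesis: (i) the Vandermonde matrix $M$ is totally non-degenerate, i.e.\ every square submatrix drawn from distinct columns is invertible, a classical fact; (ii) by assumption, at least $m$ of the blocks $A_a$ have dimension $\geq m+1$. The expected obstacle is matching these two inputs to the precise form of the ECI hypothesis, which presumably demands, for every sub-family of the equations, a Khovanskii-type condition on the corresponding collection of blocks. The Vandermonde non-degeneracy propagates to every sub-family (any $m'$ of the $\tilde g_i$'s restricted to any $m'$ of the ``large'' fibres is governed by an invertible $m'\times m'$ minor of $M$), while the dimension bound $m+1$ supplies exactly the room Khovanskii needs to produce an irreducible positive-dimensional complete intersection at each level. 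Once this bookkeeping is in place, irreducibility of the general variety follows immediately from \autoref{thm:ECI}.
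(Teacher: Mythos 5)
Your route is in substance the paper's own: the claim is derived there from \autoref{claim:ECI_critical_poly}, whose proof partitions $A$ along the fibres of $l$, adjusts the coefficient vectors to $m$ chosen fibres via \autoref{claim:ECI_adjustedness_fibres}, reduces the relevant determinant to an ordinary Vandermonde determinant using $\deg p_i = i-1$, and concludes by \autoref{thm:ECI}, the Khovanskii condition being automatic because each chosen fibre has dimension at least $m+1 > |J|$ for every non-empty $J$. So your decomposition $g_i(f)=\sum_a p_i(a) f_a$ and the final appeal to \autoref{thm:ECI} are exactly right.

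The genuine problem is your input (i) and the bookkeeping you build on it. The assertion that the matrix $(a^{i-1})_{i,a}$ is totally non-degenerate, i.e.\ that every square submatrix on distinct columns is invertible, is false over an arbitrary field $k$ (the setting of the paper, including positive characteristic): such minors are generalized Vandermonde (Schur-type) determinants and can vanish --- already over $\Q$, nodes $1,-1$ with exponent rows $0,2$ give the singular minor $\begin{pmatrix} 1 & 1 \\ 1 & 1 \end{pmatrix}$. Fortunately this fact is not what \autoref{thm:ECI} asks for, and this is where your reading of its hypothesis goes astray: the theorem does not impose a condition for every sub-family of equations. It asks for one ordered collection $\Delta_1,\dots,\Delta_m$ satisfying the Khovanskii condition (\autoref{def:khovanskii_condition}), which concerns the sets only, together with adjustedness (\autoref{def:adjusted}) of the equations, after an invertible linear change, to that collection. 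Taking $\Delta_j$ to be $m$ fibres $A_{a_1},\dots,A_{a_m}$ of dimension at least $m+1$, the coefficients of $g_i$ are constant on each $\Delta_j$ with value $p_i(a_j)$, so adjustedness amounts to row-reducing the single $m\times m$ matrix $(p_i(a_j))_{i,j}$ to echelon form with one pivot per fibre, i.e.\ to its invertibility; since $\deg p_i=i-1$ and the $a_j$ are pairwise distinct, its determinant is a nonzero multiple of $\prod_{i<j}(a_j-a_i)\ne 0$ over any field. This single full-degree Vandermonde minor is all that is needed; it is precisely the content of \autoref{claim:ECI_adjustedness_fibres} as used in the proof of \autoref{claim:ECI_critical_poly}. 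With that substitution your argument closes; as written, the step resting on total non-degeneracy would fail.
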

A direct corollary of the above claim is the following example. For simplicity until the end of this paragraph we assume that $\char k = 0$. In \autoref{ssec:ECI_critical} one can find the same examples in full generality, in particular, in arbitrary characteristic.
\begin{example*}
    Fix a coordinate system $x, y_1, \dots, y_{n - 1}$ in $T^n$ and a finite set of monomials $A \subset M$. Define 
    $H_d \defeq \{ \chi \in M\ |\ \deg_x \chi = d\}$. 
    If there are distinct $d_1, \dots, d_{m + 1} \in \Z$ such that $\dim A \cap H_{d_i} > m + 1$ for all $i$, then the locus
    \[
    f = \frac{\partial}{\partial x}f = \cdots = \frac{\partial^m}{\partial x^m} f = 0
    \]
    is irreducible for the general $f \in k^A$.
\end{example*}

Another example that does not follow from the above claim, but is a corollary of our general \autoref{thm:ECI}:
\begin{example*}
    Fix a coordinate system $x, y, z_1 \dots, z_{n - 2}$ in $T^n$ and a finite set $A \subset M$. Consider the map $c: A \to \Q^2$, $\chi \mapsto (\deg_x \chi, \deg_y \chi)$. Assume that there is a line\footnote{we assume that $0 \in l$} $l \subset \Q^2$ such that both $A \backslash c^{-1}(l)$ and $c^{-1} (l \backslash 0)$ are of dimension at least 3. Then for the general $f \in k^A$ the following locus is irreducible:
    \[
    \frac{\partial}{\partial x} f = \frac{\partial}{\partial y} f = 0.
    \]
\end{example*}



\subsection{Paper Structure}
In \autoref{sec:preliminaries} we set up the notation, recall some basic facts from the Newton Polytope Theory (\autoref{ssec:prelim_newton}) and give a quick introduction to engineered complete intersections (\autoref{sssec:prelim_engineered}) which is later used in \autoref{sec:ECI}; also in \autoref{ssec:prelim_technical} we define the geometric irreducibility and prove all the general technical statements we need.

We formulate and prove the sufficient condition for irreducibility in its most general form in \autoref{sec:general} --- it is \autoref{thm:sufficient}.

Then in \autoref{sec:khovanskii} as the first application of our method we generalize to arbitrary characteristic the results on irreducibility from the classical setting --- namely, the theorems of Khovanskii counting the number of irreducible component, \cite{Khovanskii2016} --- see \autoref{thm:khovanskii_irreducibility} and \autoref{thm:khovanskii_components}.

Finally, in \autoref{sec:ECI} we give a combinatorial sufficient condition for irreducibility for all engineered complete intersection: \autoref{thm:ECI}. As an application, we provide examples on how this condition works for some critical loci: $f = f'_x = 0$, $f'_x = f'_y = 0$, $f = f'_x = f'_{xx} = 0$, etc. 
\section{Preliminaries}\label{sec:preliminaries}
\subsection{Newton Polytope Theory}\label{ssec:prelim_newton}
Here we do some groundwork that is necessary whenever we study complete intersections in the torus in terms of their monomials.
Let us note that the lattice of monomials can be defined in a coordinate-free manner: $M \defeq \Hom(T^n, T^1) \simeq \Z^n$ --- it is a canonical unordered basis in $\Gamma(T^n, \mathcal O_{T^n})$, so for $f \in \Gamma(T^n, \O_{T^n})$ we define $\Supp f \subset M$ as the set of characters such that the coordinates of $f$ with respect to these characters are non-zero. Until the end of this section we fix non-empty finite subsets $A_1, \dots, A_m \subset M$. Then we have the spaces $k^{A_1}, \dots, k^{A_m}$, where $k^{A_i} \defeq \{f \in \Gamma(T^n, \mathcal O_{T^n})\ |\ \Supp f \subset A_i \}$. We denote by $k^{A_\bullet} \defeq k^{A_1} \times \dots \times k^{A_m}$ the space of all systems. The specific class of systems we are interested in should be denoted by $\P \subset k^{A_\bullet}$ --- it must be a vector subspace\footnote{i.e. this class must be defined by linear relations on coefficients of the polynomials}.
\subsubsection{Notation}

\begin{definition}\label{def:evaluation}
    Consider the evaluation morphism $k^{A_i} \times T^n \to \A^1_k \times T^n$, $(f, p) \mapsto~(f(p), p)$ --- it is a morphism of vector bundles\footnote{i.e. it is fiberwise linear}. Then we have the morphism of vector bundles $\E: k^{A_\bullet} \times T^n \to \A_k^m \times T^n$. We will denote by $X$ the kernel of $\E$, i.e.
    \[
    X \defeq \Ker \mathcal E =  \left\{ (\mathbf f, p) \in k^{A_\bullet} \times T^n : \mathbf f(p) = 0 \right\}.
    \]
    By $X_\P$ we denote the base change of $X \to k^{A_\bullet}$ with respect to $\P \to k^{A_\bullet}$, in other words:
    \[
    X_\P = \{ (\mathbf f, p) \in \P \times T^n\ |\ \mathbf f(p) = 0 \}
    \]
\end{definition}

\begin{notation}
    Consider the embedding $\iota : \P \to k^{A_\bullet}$. Then $\E|_\P \defeq \E \circ (\iota, \id_{T^n})$ is a vector bundle morphism $\P \times T^n \to \A^m_k \times T^n$ and clearly $X_\P = \Ker \E|_\P$. Note that if $\rk \E|_\P$ is constant on $T^n$, then $X_\P$ is a vector bundle over $T^n$ of rank $\dim \P - \rk \E|_\P$.
\end{notation}

\begin{remark}
    The space $k^{A_\bullet}$ comes with almost canonical\footnote{'almost' means that the set of coordinate functions is canonical but we have to choose an order} coordinates --- the coefficients of the polynomials. Assume that $A_i = \{ \chi^i_1, \dots, \chi^i_{r_i} \},\ 1 \le i \le m$. Then the matrix of the linear map $\mathcal E(p), p \in T^n$ with respect to the those coordinates is:
    \[
    \begin{pmatrix}
        \chi_1^1(p) & \cdots & \chi_{r_1}^1(p) & 0 & \cdots & 0 & \cdots & 0 & \cdots & 0 \\
        0 & \cdots & 0 & \chi^2_1(p) & \cdots & \chi^2_{r_2}(p) & \cdots & 0 & \cdots & 0 \\
        \vdots & \ddots & \vdots & \vdots & \ddots & \vdots & \ddots & \vdots & \ddots & \vdots \\
        0 & \cdots & 0 & 0 & \cdots & 0 & \cdots & \chi^m_1(p) & \cdots & \chi^m_{r_m}(p)
    \end{pmatrix}
    \]
\end{remark}

\begin{claim}
    $\rk \E \equiv m$ on $T^n$. 
\end{claim}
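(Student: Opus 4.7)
The plan is to read the rank off of the matrix description of $\mathcal{E}(p)$ given in the preceding remark. That matrix has a block structure: the $i$-th row is supported only in the columns corresponding to $k^{A_i}$, and its nonzero entries there are exactly the values $\chi^i_1(p), \dots, \chi^i_{r_i}(p)$. Since the $m$ rows have pairwise disjoint sets of columns on which they can be nonzero, they are automatically linearly independent as soon as each of them is itself nonzero. So the whole statement reduces to showing that, for every $p \in T^n$ and every $i$, at least one entry in the $i$-th row is nonzero.

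For this I would use the defining property of $M = \Hom(T^n, T^1)$: every character $\chi \in M$ is a morphism $T^n \to T^1 = \Spec k[t^{\pm 1}]$, so its value at any point $p \in T^n$ lies in the group of units and is in particular nonzero. Since each $A_i$ is assumed to be non-empty, the $i$-th row contains at least one entry of the form $\chi^i_j(p)$, and by the observation above this entry is nonzero. Hence every row of $\mathcal{E}(p)$ is nonzero, the rows are linearly independent by the disjoint-support argument, and $\rk \mathcal{E}(p) = m$.

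There is no real obstacle here; the claim is essentially bookkeeping once one recalls that characters of $T^n$ take values in $\mathbb{G}_m$. The only point worth stating carefully is the non-emptiness of each $A_i$ (which is assumed at the beginning of \autoref{ssec:prelim_newton}) and the fact that the rank is constant in $p$, which matters because the earlier notation remark observes that if $\rk \mathcal{E}|_{\mathcal{P}}$ is constant on $T^n$ then $X_{\mathcal{P}}$ is a vector bundle — so this claim will later let us treat $X = X_{k^{A_\bullet}}$ as a vector bundle of rank $\dim k^{A_\bullet} - m$ over $T^n$.
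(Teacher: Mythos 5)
Your proof is correct and follows essentially the same route as the paper: the paper's one-line argument likewise rests on the block structure of the evaluation matrix and the fact that every character $\chi^i_j(p)$ is a unit on $T^n$, so each of the $m$ rows is nonzero and they are independent by disjointness of their supports. You simply spell out the bookkeeping (non-emptiness of each $A_i$, constancy in $p$) that the paper leaves implicit.
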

\begin{proof}
    For all $i, j$ we have $\chi_j^i (p) \ne 0$ on $T^n$, so the rank is always equal to $m$.
\end{proof}

\begin{corollary}
    $X$ is a vector bundle of rank $n - m$ over $T^n$. In particular, $X$ is irreducible and $\dim X = \dim k^{A_\bullet} + n - m$.
\end{corollary}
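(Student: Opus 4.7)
The plan is to leverage the preceding claim directly: $\mathcal{E}$ is a morphism of trivial vector bundles over $T^n$ whose fiberwise rank is the constant $m$. I would invoke the standard fact from (relative) linear algebra that a morphism of vector bundles of locally constant rank has kernel that is itself a vector subbundle. Applied to $\mathcal{E}\colon k^{A_\bullet} \times T^n \to \mathbb{A}^m_k \times T^n$, this yields that $X = \ker \mathcal{E}$ is a vector subbundle of $k^{A_\bullet} \times T^n$ over $T^n$, of rank $\dim k^{A_\bullet} - m$ (source rank minus rank of $\mathcal{E}$).

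If one wants to avoid quoting the constant-rank fact, the direct argument is to exhibit trivializations explicitly. Around any $p \in T^n$, the displayed matrix of $\mathcal{E}(p)$ has an invertible $m \times m$ minor (pick one nonvanishing $\chi^i_{j_i}(p)$ for each row $i$); this minor remains invertible in a Zariski-open neighborhood $U \ni p$, and Gaussian elimination over the local ring $\Gamma(U,\mathcal{O})$ produces a basis of $\ker \mathcal{E}|_U$ as a free module, witnessing that $X|_U \simeq U \times \mathbb{A}^{\dim k^{A_\bullet} - m}$.

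Irreducibility is then routine: each local piece $U \times \mathbb{A}^{\dim k^{A_\bullet} - m}$ is irreducible (a product of an open of an irreducible scheme with affine space), and since $T^n$ is irreducible any two such trivializing opens meet, so their preimages in $X$ meet as well, showing no two nonempty opens of $X$ are disjoint. Finally, the dimension formula is automatic from the rank: a vector bundle of rank $r$ over a base of dimension $n$ has total space of dimension $r+n$, so $\dim X = (\dim k^{A_\bullet} - m) + n = \dim k^{A_\bullet} + n - m$.

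There is no real obstacle --- the only thing to be careful about is that the rank statement in the corollary appears to be shorthand (the correct rank is $\dim k^{A_\bullet} - m$, as is forced by the dimension claim); every step afterward is a direct application of the preceding claim.
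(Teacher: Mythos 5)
Your proposal is correct and follows essentially the same route the paper intends: the preceding claim gives $\rk \E \equiv m$, and the remark in the Notation environment (kernel of a constant-rank bundle morphism is a subbundle of rank $\dim \P - \rk \E|_\P$) immediately yields the corollary, with irreducibility and the dimension count exactly as you argue. Your observation about the stated rank is also right --- the fiber over $p \in T^n$ is the codimension-$m$ subspace $\{\mathbf f \in k^{A_\bullet} : \mathbf f(p) = 0\}$, so the rank is $\dim k^{A_\bullet} - m$, as forced by the dimension formula $\dim X = \dim k^{A_\bullet} + n - m$.
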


\subsubsection{Engineered Complete Intersections}\label{sssec:prelim_engineered}
Engineered Complete Intersections (ECI) is a non-classical setting for Newton Polytope Theory proposed by Alexander Esterov in \cite{Esterov2024}. Here we give all the necessary definitions to work with ECI. For a complete account of the notion see the original preprint \cite{Esterov2024}.
\begin{definition}
    We define the inner product $\ast : \Gamma(T^n, \mathcal O) \times \Gamma(T^n, \mathcal O) \to \Gamma(T^n, \mathcal O)$ as follows. For $\chi, \mu \in M$:
    \[
    \chi * \mu = 
    \begin{cases}
        \chi, \text{ if } \chi = \mu \\
        0, \text{ otherwise}
    \end{cases}
    \]
    Then we extend $*$ from $M \times M$ to $\Gamma(T^n, \mathcal O) \times \Gamma(T^n, \mathcal O)$ by $k$-bilinearity.
\end{definition}
\begin{remark}
    For any two $f, g \in k^A$ we have $f * g \in k^A$.
\end{remark}
\begin{definition}
    Let $A \subset M$ be a finite subset and $c_1, \dots, c_d \in k^A$ be linearly independent polynomials. Then we have the linear map $k^A \to (k^A)^d, f \mapsto (c_1 * f, \dots, c_d * f)$. The variety defined by $c_1 * f = \dots = c_d * f = 0$ for $f \in k^A$ s.t. $\Supp f = A$ is called an \textbf{\textit{engineered complete intersection}}.
    
    Let $S_1, \dots, S_m$ be engineered complete intersections. Then $\bigcap g_i S_i$ for general $g_i \in ~T^n(k)$, $T^n(k) \simeq ~(k^\times)^n$ is called an \textit{\textbf{$m$-engineered complete intersection}}.
\end{definition}

\begin{remark}
    In this paper we will study general ECI, i.e. systems $c_1 * f = \dots = c_d * f = 0$ for general $f \in k^A$.
\end{remark}

\begin{remark}\label{rem:prelim_engin_m_ECI_is_ECI}
    \textbf{Every $m$-ECI is a $1$-ECI}: fix $m$ ECI $S_1, \dots, S_m$ such that $S_i$ are defined by $c^i_1 * f_i = \cdots = c^i_{d_i} * f_i = 0$ for $c^i_j, f_i \in k^{A_i}$, where $A_i = \Supp f_i$. Let $\chi_1, \dots, \chi_m \in M$ be such that $(\chi_i \cdot A_i) \cap (\chi_j \cdot A_j) = \varnothing$ for $i \ne j$. Note that the equations $(\chi \cdot c^i_j) * (\chi \cdot f_i) = 0$ and $c^i_j * f = 0$ are equivalent $\forall \chi \in M$, so without loss of generality we could assume that $A_i \cap A_j = \varnothing$ for all $i \ne j$.
    
    Now, define $A \defeq A_1 \sqcup \cdots \sqcup A_m$. Naturally, $k^{A_i} \subset k^A$, so $c^i_j \in k^A$. Put $f = ~f_1 + ~\cdots + ~f_m$. Consider the 1-ECI 
    \[
    S \defeq \{ p \in T^n\ |\ c^i_j * f = 0 \text{ for all } 1 \le i \le m,\ 1 \le j \le d_i \}.
    \]
    Then $S$ coincides with the $m$-ECI $S_1 \cap \cdots \cap S_m$.
\end{remark}

\begin{notation}
    Fix finite sets $A_1, \dots, A_m \subset M$ and $c^i_1, \dots, c^i_{d_i} \in A_i$ that are linearly independent for all fixed $i$. That gives us linear morphisms
    \begin{gather*}
        \mathbf c^i : k^{A_i} \to \left( k^{A_i} \right)^{d_i}, f \mapsto (c^i_1 * f, \dots, c^i_{d_i} * f) \\
        \mathbf c = \oplus_{i = 1}^l \mathbf c^i: k^{A_\bullet} \to \bigoplus_{i = 1}^m (k^{A_i})^{d_i}    
    \end{gather*}
    We denote the image of $\mathbf c$ by $\P = \P(\mathbf c)$. 
\end{notation}

\begin{remark}
    Consider the case $m = 1$, i.e. just the engineered complete intersection $c_1 * f = \dots = c_d * f = 0$ for the general $f \in k^A$ and fixed $c_i \in A = \{ \chi_1, \dots, \chi_r \}$. Then $\mathcal E|_\P$ is the vector bundle morphism 
    \[
    k^A \times T^n \to \A^d \times T^n, (f, p) \mapsto ((c_1 * f)(p), \dots, (c_d * f)(p), p).
    \]
    In particular, the matrix of $\mathcal E|_\P$ over $p \in T^n$ is:
    \[
    \begin{pmatrix}
        (c_1 * \chi_1) (p) & (c_1 * \chi_2) (p) & \cdots & (c_1 * \chi_r) (p) \\
        (c_2 * \chi_1) (p) & (c_2 * \chi_2) (p) & \cdots & (c_2 * \chi_r) (p) \\
        \vdots & \vdots & \ddots & \vdots \\
        (c_d * \chi_1) (p) & (c_d * \chi_2) (p) & \cdots & (c_d * \chi_r) (p)
    \end{pmatrix}
    \]
\end{remark}

\begin{claim}\label{claim:prelim_engin}
    Consider the evaluation morphism $\E : \bigoplus_{i = 1}^l (k^{A_i})^{d_i} \to \A^{d_1 + \hdots + d_m}$ as in the previous subsection. Then $\rk \E|_\P \equiv d_1 + \dots + d_m$ on $T^n$.
\end{claim}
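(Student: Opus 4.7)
The plan is to exploit the block-diagonal structure of $\mathbf c$ and reduce the claim to a one-block computation, then use the hypothesis that the $c^i_j$ are linearly independent within each $k^{A_i}$.

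First I would observe that $\mathbf c = \oplus_{i=1}^l \mathbf c^i$ respects the direct sum decomposition of its target, so $\P = \bigoplus_{i=1}^l \P_i$, where $\P_i \defeq \im \mathbf c^i \subset (k^{A_i})^{d_i}$. Moreover, $\E$ itself is block-diagonal with respect to this decomposition: the $i$-th block of $\E|_\P(p)$ is the evaluation map $\P_i \to \A^{d_i}$, $(g_1, \dots, g_{d_i}) \mapsto (g_1(p), \dots, g_{d_i}(p))$. Hence it suffices to show that each of these $l$ blocks is surjective, i.e. has rank $d_i$, for every $p \in T^n$.

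Next, since $\mathbf c^i : k^{A_i} \to \P_i$ is surjective by definition, it is enough to show that the composition $k^{A_i} \xrightarrow{\mathbf c^i} \P_i \xrightarrow{\E(p)} \A^{d_i}$, $f \mapsto \bigl((c^i_1 * f)(p), \dots, (c^i_{d_i} * f)(p)\bigr)$, is surjective. Writing $A_i = \{\chi^i_1, \dots, \chi^i_{r_i}\}$, $c^i_j = \sum_k c^i_{j,k} \chi^i_k$, and using the definition of $*$, this composition is given in coordinates by the $d_i \times r_i$ matrix $M_{jk} = c^i_{j,k} \cdot \chi^i_k(p)$, matching the one displayed in the preceding remark.

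The key factorization is $M = C \cdot D(p)$, where $C = (c^i_{j,k})$ is the coefficient matrix of $c^i_1, \dots, c^i_{d_i}$ in the monomial basis of $A_i$, and $D(p)$ is the diagonal matrix with entries $\chi^i_k(p)$. By hypothesis $c^i_1, \dots, c^i_{d_i}$ are linearly independent in $k^{A_i}$, so $\rk C = d_i$. Since $p \in T^n$ has all coordinates nonzero, each character $\chi^i_k$ is invertible at $p$, so $D(p)$ is an invertible diagonal matrix. Thus $\rk M = \rk C = d_i$, proving surjectivity of the $i$-th block. Summing over $i$ gives $\rk \E|_\P(p) = d_1 + \cdots + d_l$ for every $p \in T^n$. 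There is no real obstacle here; the whole argument is a rank computation, and the only point to be careful about is routing through $k^{A_i}$ rather than trying to parametrize $\P_i$ directly.
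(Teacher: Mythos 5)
Your proof is correct and follows essentially the same route as the paper: reduce to a single block via the direct sum decomposition $\P = \P_1 \oplus \cdots \oplus \P_l$, then factor the evaluation matrix as $C \cdot \operatorname{diag}(\chi_1(p), \dots, \chi_r(p))$ and use linear independence of the $c^i_j$ together with invertibility of the diagonal factor on $T^n$. Your explicit routing through $k^{A_i}$ (rather than parametrizing $\P_i$ directly) just makes precise an identification the paper leaves implicit, so there is no substantive difference.
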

\begin{proof}
Put $\P_i \defeq \mathbf c^i (k^{A_i})$. Then $\P = \P_1 \oplus \cdots \oplus \P_m$. Clearly $\E|_\P = \E|_{\P_1} \oplus \cdots \oplus \E|_{\P_m}$, so without loss of generality we assume $m = 1$ and $\P = \P_1$. Denote by $C$ the matrix of $\mathbf c$: it has $d = d_1$ rows and its columns are indexed by $\chi \in A = A_1 = \{ \chi_1, \dots, \chi_r \}$. Then over $p \in T^n$ we have that $\E|_\P = C \cdot \operatorname{diag}(\chi_1(p), \dots, \chi_r(p)).$ Since all $c_j$ are linearly independent, we know that the rows of $C$ are linearly independent, i.e. $\rk C = d$. So, $\rk \E|_\P = \rk C = d$ as multiplying by an invertible matrix does not affect the rank.
\end{proof}
\begin{corollary}
    Put $X_\P \defeq \Ker \E|_\P$. Then $X_\P$ is a vector bundle over $T^n$. In particular, $X_\P$ is irreducible and $\dim X_\P = n + \sum_i (|A_i| - d_i)$.
\end{corollary}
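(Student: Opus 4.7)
The plan is to deduce this corollary directly from the preceding claim, which gives $\rk \E|_\P \equiv d := d_1 + \cdots + d_l$ on $T^n$. First, I would note that this rank equals the rank of the target trivial bundle $\A^{d} \times T^n$, so $\E|_\P$ is fibrewise surjective as a morphism of trivial vector bundles over $T^n$. By the standard linear-algebra fact that a surjection between locally free sheaves of constant rank has a locally free kernel (equivalently: a surjection of free modules splits, yielding the kernel as a direct summand of the source), $X_\P = \Ker \E|_\P$ is a vector subbundle of $\P \times T^n$ of rank $\dim \P - d$.

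For irreducibility, I would use that $T^n = \Spec k[x_1^{\pm 1}, \ldots, x_n^{\pm 1}]$ is integral (its coordinate ring is a domain), hence irreducible, and that any vector bundle over an irreducible base is irreducible. Concretely, local trivializations give an open cover of $X_\P$ by pieces of the form $U \times \A^r$ with $U$ integral and $r = \dim \P - d$; each such piece is integral, and any two of them intersect nonempty (since $T^n$ is irreducible), which suffices to force $X_\P$ itself to be irreducible.

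Finally, the dimension count is $\dim X_\P = \dim T^n + \mathrm{rk}(X_\P) = n + (\dim \P - d)$. Using $\dim \P = \sum_i |A_i|$ (from the nondegenerate ECI setup in which the map $\mathbf c$ is injective, since otherwise the kernel of $\mathbf c$ would correspond to characters in $A_i \setminus \bigcup_j \Supp c^i_j$ that can be assumed empty) gives exactly $\dim X_\P = n + \sum_i (|A_i| - d_i)$, as required. The only genuine ingredient is the constant-rank statement from the preceding claim; the remaining steps are standard vector-bundle formalism, so there is no real obstacle here, just bookkeeping.
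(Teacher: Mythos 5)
Your proposal is correct and takes essentially the same route the paper (implicitly) intends for this corollary: the constant rank of $\E|_\P$ from Claim \ref{claim:prelim_engin} makes $\Ker \E|_\P$ a subbundle of $\P \times T^n$ of rank $\dim \P - \sum_i d_i$, irreducibility follows because $T^n$ is irreducible, and the dimension is base plus rank. Your side remark that the stated formula needs $\mathbf c$ to be injective (equivalently, every character of $A_i$ lies in $\bigcup_j \Supp c^i_j$, which one may assume after shrinking $A_i$) is a fair and correct observation; it matches the paper's tacit assumption, visible where it later writes $\P \cong k^{A_\bullet}$.
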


\begin{remark}
    Every $m$-engineered complete intersection is a fibre of the projection $X_\P \to \P \cong k^{A_\bullet}$ and for the general $\mathbf f \in k^{A_\bullet}$ the fibre $(X_\P)_{\mathbf f}$ is an $m$-engineered complete intersection.
\end{remark}

\subsubsection{Kouchnirenko-Bernstein Formula}
Here we recall a classical result that laid the foundations of Newton Polytope theory.
\begin{definition}
    For two subsets $A, B \subset \R^n$ we define $A + B \defeq \{ a + b\ |\ a \in A, b \in B\}$ --- \textbf{the Minkowski sum}.
\end{definition}

\begin{definition}
    Let $L$ be a lattice, i.e. $L \simeq \Z^n$. We define the \textbf{lattice volume} with respect to $L$ as the unique Euclidean volume form $\Vol_L$ on $L_\R$ such that $\Vol_L (\Delta) = 1$, where\footnote{by $\Conv$ we denote the convex hull} $\Delta = \Conv \{ 0, e_1, \dots, e_n \}$ and $e_1, \dots, e_n$ is a basis of $L$.
\end{definition}

\begin{remark}
    For any finite subset $S \subset L$ we have that $\Vol_L(\Conv S)$ is an integer because $\Conv S$ admits a triangulation by simplicies with vertices in $L$.
\end{remark}

\begin{remark}
    Recall that a polytope is the convex hull of finitely many points. One can easily see that $\Conv (A + B) = \Conv A + \Conv B$, so the sum of any two polytopes is a polytope. It means that given a real space $V$ the set of all polytopes $\operatorname{Pol}(V)$ from $V$ is naturally a monoid with the operation of Minkowski sum and $\{ 0 \}$ as the neutral element.
\end{remark}

\begin{definition}
    Let $L$ be a lattice of rank $n$. The \textbf{lattice mixed volume} with respect to $L$ is the unique function $\MVol_L: \operatorname{Pol}(L_\R)^n \to \R_+$ that satisfies:
    \begin{itemize}
        \item \textit{Linearity:} $\MVol_L(P_1 + P', P_2, \dots P_n) = \MVol_L (P_1, P_2, \dots, P_n) + \MVol_L(P', \dots, P_n)$ for all $P', P_i \in \operatorname{Pol}(L_\R)$;
        
        \item \textit{Symmetricity:} $\MVol_L(P_1, \dots, P_n) = \MVol_L(P_{\sigma(1)}, \dots, P_{\sigma(n)})$ for all $\sigma \in S_n$ and $P_i \in\operatorname{Pol}(L_\R)$;

        \item \textit{Diagonal volume:} $\MVol_L(P, \dots, P) = \Vol_L (P) \quad \forall P \in \operatorname{Pol}(V)$.
    \end{itemize}
    In other word, $\MVol_L$ is the polarization of $\Vol_L: \operatorname{Pol}(L_\R) \to \R_+$.
\end{definition}

\begin{claim}
    $\MVol_L(P_1, \dots, P_n) = \frac{1}{n!} \sum_{l = 1}^n (-1)^{n - l} \sum_{1 \le i_1 \le \dots \le i_l \le n} \Vol_L (P_{i_1} + \dots + P_{i_l})$.
\end{claim}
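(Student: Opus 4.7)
The plan is to derive the identity from the standard polarization trick for homogeneous polynomials. The essential input I would need is the classical theorem of Minkowski: for any polytopes $Q_1,\dots,Q_k \in \operatorname{Pol}(L_\R)$, the function
\[
F(\lambda_1,\dots,\lambda_k) \;=\; \Vol_L(\lambda_1 Q_1 + \cdots + \lambda_k Q_k), \qquad \lambda_i \in \R_{\ge 0},
\]
is the restriction to the nonnegative orthant of a homogeneous polynomial of degree $n$. This is the nontrivial piece; once it is in hand everything else is formal.

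Specializing Minkowski's result with $k = n$ and $Q_i = P_i$, and then expanding using the multilinearity and symmetry axioms of $\MVol_L$, I would obtain
\[
F(\lambda_1, \dots, \lambda_n) \;=\; \sum_{(i_1,\dots,i_n) \in [n]^n} \lambda_{i_1} \cdots \lambda_{i_n}\, \MVol_L(P_{i_1}, \dots, P_{i_n}).
\]
By the symmetry axiom, the coefficient of the monomial $\lambda_1 \lambda_2 \cdots \lambda_n$ on the right-hand side equals $n!\cdot \MVol_L(P_1,\dots,P_n)$. The task is therefore reduced to extracting this coefficient from $F$.

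For the extraction I would invoke the standard coefficient-extraction identity: for any homogeneous degree-$n$ polynomial $F$ in $n$ variables,
\[
n! \cdot [\lambda_1 \lambda_2 \cdots \lambda_n]\, F \;=\; \sum_{I \subseteq [n]} (-1)^{n - |I|}\, F(\mathbf 1_I),
\]
where $\mathbf 1_I \in \{0,1\}^n$ is the indicator vector of $I$. This follows from applying the composition of the finite-difference operators $\Delta_i G := G|_{\lambda_i=1} - G|_{\lambda_i=0}$ for $i=1,\dots,n$ to $F$: the composite $\Delta_1\cdots\Delta_n$ annihilates every monomial missing some variable and acts as the identity on $\lambda_1 \cdots \lambda_n$, so its value at the origin is $n!$ times the desired coefficient.

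Substituting $F(\mathbf 1_I) = \Vol_L\!\left( \sum_{i \in I} P_i \right)$ and dividing by $n!$ yields the claimed identity, after re-indexing subsets $I\subseteq [n]$ by their size $l = |I|$; the term $l=0$ contributes nothing since the empty Minkowski sum is $\{0\}$ and $\Vol_L(\{0\})=0$. The main obstacle, as indicated, is Minkowski's polynomiality theorem — a classical statement proved via lattice triangulations (or, equivalently, via the Hilbert polynomial of ample divisors on toric varieties) — which the paper treats as part of the standard background of Newton polytope theory.
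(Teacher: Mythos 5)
The paper does not actually prove this claim --- it simply cites Ewald (Thm.\ 3.7), so your self-contained polarization argument is by construction a different route, and a sound one: granting a function $\MVol_L$ with the stated axioms, coefficient extraction from $F(\lambda_1,\dots,\lambda_n)=\Vol_L(\lambda_1P_1+\cdots+\lambda_nP_n)$ does yield the formula. In fact you need less than you invoke: since you only ever evaluate $F$ at $0/1$ vectors, Minkowski's polynomiality theorem is not required. The diagonal axiom gives $F(\mathbf 1_I)=\Vol_L\bigl(\sum_{i\in I}P_i\bigr)=\MVol_L\bigl(\sum_{i\in I}P_i,\dots,\sum_{i\in I}P_i\bigr)$, iterated Minkowski-additivity expands this as $\sum_{(i_1,\dots,i_n)\in I^n}\MVol_L(P_{i_1},\dots,P_{i_n})$, and summing $(-1)^{n-|I|}$ over all $I\subseteq\{1,\dots,n\}$ annihilates every tuple whose index set is a proper subset, leaving $n!\,\MVol_L(P_1,\dots,P_n)$ by symmetry. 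This also sidesteps a small foundational wrinkle in your version: expanding $F$ at arbitrary real $\lambda_i$ implicitly uses homogeneity of $\MVol_L$ in each slot under real nonnegative dilation, which does not follow formally from the additivity axiom alone (one needs continuity or a rational-approximation argument); at $0/1$ vectors pure additivity suffices.

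There is one constant-factor slip to fix: the finite-difference identity is $[\lambda_1\cdots\lambda_n]F=\sum_{I}(-1)^{n-|I|}F(\mathbf 1_I)$, with no extra $n!$ on the left --- the operator $\Delta_1\cdots\Delta_n$ sends $\lambda_1\cdots\lambda_n$ to $1$, so its value at the origin is the coefficient itself, exactly as your own justification indicates. As written, your chain would give $n!\cdot n!\,\MVol_L(P_1,\dots,P_n)=\sum_I(-1)^{n-|I|}\Vol_L\bigl(\sum_{i\in I}P_i\bigr)$, and ``dividing by $n!$'' would land on a $1/(n!)^2$ prefactor rather than the claimed $1/n!$. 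With the corrected identity everything matches: the coefficient of $\lambda_1\cdots\lambda_n$ is $n!\,\MVol_L(P_1,\dots,P_n)$, and dividing by $n!$ gives the claim. Note finally that what you prove is the usual subset-indexed formula, i.e.\ with strict inequalities $i_1<\dots<i_l$; the weak inequalities in the paper's statement should be read as a typo, since allowing repeated indices would change (and falsify) the identity.
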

\begin{proof}
    Cf. \cite[Thm 3.7, p.118]{Ewald1996}.
\end{proof}

\begin{remark}
    For any subsets $S_1, \dots, S_n \subset L$ we have that $\MVol(\Conv S_1, \dots, \Conv S_n)$ is an integer.
\end{remark}

\begin{theorem}[Kouchnirenko-Bernstein]
    Let $A_1, \dots, A_n \subset M$ be finite subsets of the character lattice and $\Delta_i \defeq \Conv_{M_\R} A_i$ be the corresponding Newton Polytopes. Let $k = \bar k$. Then for the general $\mathbf f \in k^{A_\bullet}$ the system $f_1 = \dots = f_n = 0$ has $\MVol_M(\Delta_1, \dots, \Delta_n)$ solutions in $T^n$.
\end{theorem}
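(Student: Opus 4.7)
The plan is to turn the counting problem into an intersection-theoretic computation on a toric compactification of $T^n$. I would choose a smooth complete toric variety $X_\Sigma$ whose fan refines the inner normal fans of all the lattice polytopes $\Delta_i$. On $X_\Sigma$ each $\Delta_i$ then determines a globally generated torus-invariant line bundle $L_i$, and each $f \in k^{A_i}$ extends to a global section $\tilde f_i \in H^0(X_\Sigma, L_i)$. The hope is then that, for the general $\mathbf f \in k^{A_\bullet}$, the zero scheme $Z(\tilde f_1, \dots, \tilde f_n)$ is finite, reduced, contained entirely in the open torus, and computes the intersection number $(L_1 \cdots L_n)$.

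To realize this I would proceed in two steps. The first is a Bertini / transversality argument: for the general $\mathbf f$ the scheme $Z$ is $0$-dimensional and reduced. More delicate is the claim that no root leaks to the toric boundary. For each cone $\sigma \in \Sigma$ the restriction of $f_i$ to the corresponding orbit $O_\sigma$ is supported on the face of $A_i$ cut out by that cone; this gives $n$ Laurent equations on a torus of dimension strictly smaller than $n$, so by an orbit-by-orbit dimension count (run by induction on $\dim \sigma$, using that $k = \bar k$ so that generic systems have the expected dimension) the general member has no solution on any boundary orbit. Once this is established, $\#(Z \cap T^n)$ agrees with the Chow-ring intersection number $(L_1 \cdots L_n)$.

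The second step is the classical toric identity $(L_1 \cdots L_n) = n! \cdot \MVol_M(\Delta_1, \dots, \Delta_n)$. Since both sides are symmetric multilinear functions of $L_1, \dots, L_n$ (intersection numbers via bilinearity of the Chow-ring pairing, mixed volume by definition as the polarization of $\Vol_M$), by uniqueness of polarization it suffices to check the diagonal case $L_1 = \cdots = L_n = L$: namely, that the degree of the projective toric variety associated to a lattice polytope $\Delta$ equals $n! \Vol_M(\Delta)$, which follows either from Ehrhart theory or from a direct computation in the equivariant Chow ring of $X_\Sigma$. The step I expect to be the main obstacle is the Bertini-plus-no-escape-to-the-boundary argument: the line bundles $L_i$ are only nef in general, not ample, so one cannot invoke Bertini as a black box and must genuinely exploit the combinatorial structure of the supports $A_i$ to control vanishing on each toric orbit.
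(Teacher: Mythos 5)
The paper does not actually reprove Kouchnirenko--Bernstein: its ``proof'' is a pointer to \cite{bernstein} and \cite{Kushnirenko1977}, plus the remark that those arguments are purely algebraic and hence valid over any algebraically closed field. Your route --- compactify on a smooth complete toric variety refining the normal fans, extend each $f_i$ to a section of the nef line bundle $L_i$, exclude zeros on the boundary orbit by orbit, and identify the intersection number with the mixed volume by polarization --- is the standard toric-intersection-theory proof and is a perfectly legitimate alternative. The part you flag as delicate is in fact fine in any characteristic: the restriction of $\tilde f_i$ to an orbit is a polynomial with generic coefficients supported on the corresponding face of $A_i$ (nonzero, since the vertices of that face lie in $A_i$; possibly a single monomial, which never vanishes on the orbit), and $n$ such equations on a torus of dimension $<n$ generically have empty common zero locus by a routine incidence-variety dimension count.

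Two steps do need repair, though. First, the Bertini claim that the general $Z$ is reduced is false in positive characteristic: for $n=1$, $\char k = p$, $A = \{1, x^p\}$, every $f = a + bx^p = b(x+c)^p$ has a single $p$-fold root while the lattice volume of the segment is $p$. So in characteristic $p$ only the multiplicity-counted statement holds (and that is how the theorem must be read); you should drop reducedness and only prove that $Z$ is $0$-dimensional and disjoint from the toric boundary --- which your dimension count already gives --- so that $(L_1\cdots L_n)$ equals the length of $Z$, i.e.\ the number of solutions counted with multiplicity. Relatedly, generic smoothness and Kleiman-type transversality are characteristic-zero tools and cannot be invoked here as black boxes. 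Second, a normalization slip: with the paper's lattice normalization, in which $\Vol_M$ of the standard simplex is $1$, the toric identity reads $(L_1\cdots L_n) = \MVol_M(\Delta_1,\dots,\Delta_n)$ with no extra factor $n!$; the factor $n!$ belongs to the Euclidean-normalized mixed volume, and correspondingly the degree computation in the diagonal case should give $\Vol_M(\Delta)$, not $n!\,\Vol_M(\Delta)$.
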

\begin{proof}
    See \cite{bernstein} for the case $k = \C$ and for the arbitrary field see \cite{Kushnirenko1977} --- the author wrote the proof only for $k = \C$ but since it is purely algebraic the proof is valid over arbitrary algebraically closed field. In fact the proof in \cite{bernstein} also does not rely on any techinques that work exclusively in zero characteristic so it may be adapted to work in a purely algebraic setting as well.
\end{proof}

\subsection{Technical Toolkit}\label{ssec:prelim_technical}
\subsubsection{(Geometric) Irreducibility}
Studying the Newton Polytopes theory over the fields that are not algebraically closed could seem odd. For example, we could take $A = \{1 , x, x^2\}$ and it is well-known
that the subspace of polynomials of $\R^A$ that have the same number of roots is bounded by a paraboloid, in particular, it is neither open nor closed in the Zariski topology. The same sort of thing happens with the irreducibility. However, the following more stable notion comes in useful:

\begin{definition}
    The $k$-scheme $W$ is called \textbf{geometrically irreducible} if for any field extension $L/k$ the base change $L$-scheme $W_L = W \times_k L$ is irreducible.
\end{definition}

\begin{remark}
    A $k$-scheme of finite type is geometrically irreducible if and only if its base change with respect to $\bar k$ (algebraic closure) is irreducible.
\end{remark}

\begin{remark}
    We are interested in the irreducibility of the general fibre of the projection $X_\P \to \P$. We could also study the irreducibility of the generic fibre, i.e. the fibre of the generic point of the scheme $\P$. Generally speaking, the irreducibility of the generic fibre must not imply the irreducibility of the general fibre. However, it is the case if we work with the geometric irreducibility (and some mild assumptions on the morphism) as follows from \cite[IV.3, 9.7.8]{EGA}. 
\end{remark}

The following theorem gives a usable form to the above speculation.

\begin{theorem}\label{thm:fibre_condition}
    Let $W \to Y$ be a dominant finite type mophism of noetherian schemes such that $Y$ is irreducible and the fibred square $W \times_Y W$ is irreducible. Then the general fibre of $W \to Y$ is geometrically irreducible, i.e. there is a non-empty open subset $U \subset Y$ such that for any $y \in U$ the fibre $W_y$ is geometrically irreducible.
\end{theorem}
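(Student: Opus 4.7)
The plan is to reduce the theorem to showing that the generic fibre of $W \to Y$ is geometrically irreducible, and then invoke the result EGA IV.3, 9.7.8 cited above to spread this geometric irreducibility to an open neighborhood of the generic point of $Y$. Let $\eta$ denote the generic point of $Y$ and set $K \defeq k(\eta)$; the theorem reduces to showing that $W_\eta$ is geometrically irreducible over $K$.

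First I would check that $W$ itself is irreducible: the diagonal $\Delta_{W/Y}$ is a section of either projection $W \times_Y W \to W$, so both projections are surjective, and the irreducibility of $W \times_Y W$ forces the irreducibility of its image $W$. A standard generic-point argument --- every point of $W_\eta$ lies in the closure of the unique generic point $\xi$ of $W$, because $\xi$ maps to $\eta$ by dominance --- then shows that $W_\eta$ is irreducible. The same argument applied to the dominant irreducible scheme $W \times_Y W \to Y$ shows that the generic fibre $W_\eta \times_K W_\eta = (W \times_Y W)_\eta$ is irreducible as well.

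The heart of the proof is the following field-theoretic claim: if $Z$ is a finite-type $K$-scheme with $Z \times_K Z$ irreducible, then $Z$ is geometrically irreducible. I would prove this by Galois descent. Suppose for contradiction that $Z_{\bar K}$ has irreducible components $Z_1, \dots, Z_n$ with $n \ge 2$. Each $Z_i$ is geometrically irreducible over the algebraically closed $\bar K$, so the components of $(Z \times_K Z)_{\bar K} = Z_{\bar K} \times_{\bar K} Z_{\bar K}$ are precisely the $n^2$ products $Z_i \times_{\bar K} Z_j$. Since $\bar K / K^s$ is purely inseparable (so $\operatorname{Spec} \bar K \to \operatorname{Spec} K^s$ is a universal homeomorphism), passing to the separable closure does not change the component structure; the absolute Galois group $\operatorname{Gal}(K^s/K)$ then acts on $\{Z_1, \dots, Z_n\}$ transitively, because its orbits correspond to the components of $Z$ over $K$ and $Z$ is irreducible. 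The induced diagonal action on the $n^2$ pairs is however not transitive: the diagonal $\{(Z_i, Z_i)\}$ is a proper invariant subset of size $n < n^2$. Hence $Z \times_K Z$ has at least two irreducible components, contradicting the hypothesis. Applied to $Z = W_\eta$, this finishes the reduction.

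I expect the Galois descent step to be the main subtlety: one must justify that Galois orbits of irreducible components of $Z_{K^s}$ correspond bijectively to the components of $Z$ over $K$, and separately that the purely inseparable extension $\bar K / K^s$ does not alter this component count. Both facts are classical and available in Stacks or EGA IV, but they are easy to gloss over if one is not careful.
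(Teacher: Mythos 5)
Your proposal is correct, and its outer skeleton coincides with the paper's: reduce to the generic fibre via EGA IV.3, 9.7.8, observe that irreducibility of $W \times_Y W$ forces irreducibility of $W$, and deduce that $W_\eta$ and $W_\eta \times_K W_\eta$ are irreducible; the real content in both cases is the statement that a finite type $K$-scheme $Z$ with $Z \times_K Z$ irreducible is geometrically irreducible. Where you genuinely diverge is in the proof of that statement. The paper reduces to the affine integral case, passes to the function field $Q$ of $W_\eta$, and uses the Stacks results cited there (Tags 037N and 0G33) to reduce geometric irreducibility to ``$K$ is separably closed in $Q$'', which it then verifies by exhibiting non-nilpotent zero divisors in $K(\alpha)\otimes_K K(\alpha) \subset Q \otimes_K Q$ for a separable $\alpha \notin K$. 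You instead count irreducible components over $\bar K$ and use the Galois action: components of $Z_{K^s}$ biject with those of $Z_{\bar K}$ (purely inseparable base change is a universal homeomorphism), components of $Z_{\bar K}\times_{\bar K} Z_{\bar K}$ are the $n^2$ products of components (this uses that $Z$ is of finite type, which holds for $Z = W_\eta$), and components of $Z\times_K Z$ correspond to Galois orbits of components upstairs; since the diagonal pairs form a proper invariant subset when $n \ge 2$, there are at least two orbits, contradicting irreducibility of $Z \times_K Z$. (Note that transitivity of the Galois action on $\{Z_1,\dots,Z_n\}$ is not even needed for this last step — invariance of the diagonal suffices.) The two arguments are cousins — the zero divisors in $K(\alpha)\otimes_K K(\alpha)$ are exactly the algebraic shadow of multiple components — but yours is more geometric and quantitative (it bounds the number of components of $Z\times_K Z$ from below by the number of orbits), and it avoids the paper's reduction to the affine case, the passage to fraction fields, and the localization/nilpotent bookkeeping; the price is that you must invoke the standard descent facts you flag (Galois orbits versus components, e.g. Stacks Tag 04KZ, and irreducibility of products of irreducible finite type schemes over an algebraically closed field), all of which are indeed classical and apply here.
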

\begin{proof}
We will assume without loss of generality that $Y$ is affine. Let $\eta \in Y$ be the generic point of $Y$. By \cite[IV.3, 9.7.8]{EGA}, we only need to show that the generic fibre $W_\eta$ is geometrically irreducible.

If $W$ is not irreducible, then let $W = W_1 \cup \dots \cup W_r$ be its decomposition into irreducible components. Then we have the decomposition into distinct closed subsets: $W \times_Y W = \bigcup_{1 \le i, j \le r} W_i \times_Y W_j$, so $W \times_Y W$ is not irreducible and the statement of the theorem is trivially satisfied. From now on we assume that $W$ is irreducible.

In fact, we could also assume that $W$ is affine. Indeed, let $W = U_1 \cup \dots \cup U_n$ be an open covering such that $U_i$ are affine and non-empty. Since the generic point of $W$ lies in each $U_i$, we get that the fibres $U_{i\eta}$ give an affine open covering of $W_\eta$ such that for all indicies $i, j$ the intersection $U_{i\eta} \cap U_{j\eta}$ is non-empty. Then if we prove that all $U_{i\eta}$ are geometrically irreducible, we will also get that $W_\eta$ is geometrically irreducible. So, we assume without loss of generality that $W$ is affine.

Since $W \times_Y W$ is irreducible, we get that $(W \times_Y W)_\eta = W_\eta \times_\eta W_\eta$ is also irreducible. Now we are left with an algebraic statement to prove: if $A$ is a $K$-algebra ($K \defeq k(\eta)$) with no zero divisors and $A \otimes_K A$ has no zero divisors except nilpotents, then $\Spec A$ is geometrically irreducible. First, note that we can replace $A$ with $A_{red}$, so without loss of generality $A$ is integral. Now, consider $Q$ --- the fraction field of $A$. By \citestacks{037N} we can just show that $Q$ is geometrically irreducible over $K$. By \citestacks{0G33} it is sufficient to prove that $K$ is separably closed in $Q$. Assume the contrary: there is $\alpha \in Q$ that is separably algebraic over $K$ and $\alpha \not \in K$. Then $K(\alpha) \otimes_K K(\alpha)$ contains non-nilpotent (because $\alpha$ is separable) zero divisors. Localization cannot add non-nilpotent zero divisors if there were none, so $Q \otimes_K Q$ must have no non-nilpotent zero divisors. Since $K(\alpha) \otimes_K K(\alpha)$ is a subalgebra of $Q \otimes_K Q$, we get that $Q \otimes_K Q$ also has non-nilpotent zero divisors, which is a contradiction. Hence, $K$ is separably closed in $Q$ and $\Spec A$ is geometrically irreducible over $K$.
\end{proof}

\begin{corollary}\label{cor:fibre_criterion}
    Let $W \to Y$ be a flat dominant finite type mophism of noetherian schemes and $Y$ be irreducible. Then $W \times_Y W$ is irreducible if and only if the general fibre of $W \to Y$ is geometrically irreducible.
\end{corollary}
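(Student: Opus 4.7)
The $(\Rightarrow)$ implication is a direct application of \autoref{thm:fibre_condition}: under the corollary's hypotheses the morphism $W \to Y$ is dominant and of finite type between noetherian schemes, so irreducibility of $W \times_Y W$ yields a non-empty open $U \subset Y$ over which the fibres of $W \to Y$ are geometrically irreducible.

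For the converse, assume the general fibre is geometrically irreducible. Since $Y$ is irreducible noetherian, its generic point $\eta$ lies in every non-empty open, and hence the generic fibre $W_\eta$ is itself geometrically irreducible. By definition this means $W_\eta \times_{\kappa(\eta)} W_\eta$ is irreducible, and this product agrees with the generic fibre $(W \times_Y W)_\eta$ by compatibility of fibre products. It remains to propagate the irreducibility from the generic fibre to the whole of $W \times_Y W$, and here flatness is the essential input. Flatness is stable under base change and composition, so the structure morphism $W \times_Y W \to Y$ is flat; by the going-down property of flat morphisms --- together with the fact that the generic point of an irreducible component admits no proper generization --- every generic point of $W \times_Y W$ maps to a generic point of $Y$, which must be $\eta$. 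Therefore each irreducible component of $W \times_Y W$ contributes a distinct generic point lying over $\eta$, i.e.\ a distinct generic point of the irreducible space $(W \times_Y W)_\eta$. But an irreducible space has only one generic point, so $W \times_Y W$ has a unique generic point and is itself irreducible.

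I do not anticipate any serious obstacle: the only non-trivial ingredient is the standard fact that a flat morphism sends generic points of irreducible components to generic points of irreducible components, which is immediate from going-down. Everything else is a formal manipulation of the hypotheses together with \autoref{thm:fibre_condition}.
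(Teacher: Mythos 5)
Your argument has the right shape and, apart from one step, matches the paper's strategy: the forward implication is exactly \autoref{thm:fibre_condition}, and for the converse both you and the paper use flatness to spread irreducibility of the generic fibre of $W \times_Y W \to Y$ to all of $W \times_Y W$. You do this by going-down (every maximal point of $W \times_Y W$ lies over $\eta$, and the irreducible fibre $(W\times_Y W)_\eta$ has only one maximal point), while the paper argues the contrapositive and uses that $W_\eta \times_\eta W_\eta \to W \times_Y W$ is dominant as a flat base change of the dominant $\eta \to Y$; these two mechanisms are interchangeable, and your bookkeeping with maximal points, as well as the observation that $\eta$ lies in the open set $U$ witnessing ``general'', is correct.

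The step you cannot wave through is the assertion that geometric irreducibility of $W_\eta$ means ``by definition'' that $W_\eta \times_{\kappa(\eta)} W_\eta$ is irreducible. The definition only concerns base changes $W_\eta \times_{\kappa(\eta)} L$ along field extensions $L/\kappa(\eta)$; the self-product is a fibre product with a scheme that is not a field, and its irreducibility is a genuine (if standard) lemma: over a field, the product of a geometrically irreducible scheme with an irreducible scheme is irreducible. This is precisely where the paper does a little work, and you can repair your proof the same way: $W_\eta$ is irreducible, so let $\xi$ be its generic point; then $W_\eta \times_{\kappa(\eta)} \kappa(\xi)$ is irreducible because $W_\eta$ is geometrically irreducible, and it is the fibre over $\xi$ of the flat second projection $W_\eta \times_{\kappa(\eta)} W_\eta \to W_\eta$, hence dense by exactly the going-down argument you already use (all maximal points of the product lie over $\xi$). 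An irreducible dense subset forces $W_\eta \times_{\kappa(\eta)} W_\eta$ to be irreducible, and the rest of your argument then goes through unchanged.
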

\begin{proof}
    In one direction we are done by the above theorem. So, assume that $W \times_Y W$ is not irreducible. Again, by \cite[IV.3, 9.7.8]{EGA} we need to show the the generic fibre $W_\eta$ is not geometrically irreducible with $\eta$ being the generic point of $Y$. Assume the contrary: $W \times_Y W$ is not irreducible and $W_\eta$ is geometrically irreducible. Since $W \to Y$ is flat, so is the fibred square $W \times_Y W \to W$. The morphism $\eta \to Y$ is dominant, so $W_\eta \times_\eta W_\eta \to W \times_Y W$ being a flat base change of a dominant morphism also must be dominant. Therefore, $W_\eta \times_\eta W_\eta$ cannot be irreducible. Now, let $\xi \in W_\eta$ be the generic point. $\xi \to W_\eta$ is dominant, so $W_\eta \times_\eta k(\xi) \to W_\eta \times_\eta W_\eta$ is dominant, hence $W_\eta \times_\eta k(\xi)$ is not irreducible and $W_\eta$ is not geometrically irreducible --- contradiction.
\end{proof}

When using the above theorem the following lemmas comes in useful:
\begin{lemma}
    Let $W$ be a Jacobson scheme, $Z \subset X$ be a subset with the induced subspace topology. Assume that for any point $p \in Z$ that is closed in $W$ we have $\dim_p Z < \dim_p W$. Then $W \backslash Z$ is dense in $W$.
\end{lemma}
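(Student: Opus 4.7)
The plan is to argue by contradiction, using the Jacobson hypothesis to locate a point $p$ closed in $W$ at which the assumed strict inequality is violated. (I read the typo ``$Z\subset X$'' in the statement as $Z\subset W$.) Suppose $W\backslash Z$ is not dense in $W$, and set $U\defeq W\backslash\overline{W\backslash Z}$. Then $U$ is a non-empty open subset of $W$ entirely contained in $Z$. Because $W$ is Jacobson, every non-empty open subscheme of $W$ contains a point that is closed in $W$; fix such a $p\in U\subset Z$. By the hypothesis of the lemma $\dim_pZ<\dim_pW$, and it will suffice to contradict this by showing $\dim_pZ=\dim_pW$.

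For this I use the standard characterisation $\dim_pY=\inf\{\dim V:V\subset Y\text{ open},\,p\in V\}$. The key point is that, $U$ being open in $W$ and contained in $Z$, the two subspace topologies on $U$ inherited from $W$ and from $Z$ coincide. Consequently, for any $V\subset U$ the conditions ``$V$ is open in $W$'' and ``$V$ is open in $Z$'' are equivalent, and the topological dimension of such a $V$ is intrinsic (it does not depend on whether $V$ is viewed inside $W$ or inside $Z$). Moreover, the family of such $V$ containing $p$ is cofinal in the family of all open neighbourhoods of $p$ in $W$: any $V'$ open in $W$ with $p\in V'$ produces $V'\cap U\subset U$, which is still open in $W$ and still contains $p$, with $\dim(V'\cap U)\le\dim V'$. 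The analogous statement holds with $Z$ in place of $W$. Taking infima over this common cofinal family gives
\[
\dim_pW\;=\;\inf_{\substack{V\text{ open in }W\\ p\in V\subset U}}\dim V\;=\;\inf_{\substack{V\text{ open in }Z\\ p\in V\subset U}}\dim V\;=\;\dim_pZ,
\]
contradicting the hypothesis.

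The only non-routine ingredient is the Jacobson step, namely the fact that a non-empty open subscheme of a Jacobson scheme contains a point closed in the ambient scheme; the rest is a careful unwinding of the definition of the local dimension $\dim_p$ together with the elementary observation that nesting $U$ between $Z$ and $W$ makes both subspace topologies on $U$ agree. I do not foresee any further obstacle.
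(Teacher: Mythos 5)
Your proof is correct and follows essentially the same route as the paper: assume non-density to get a non-empty open $U\subset Z$, use the Jacobson property to find a point of $U$ closed in $W$, and derive the contradiction $\dim_p Z=\dim_p W$. The only difference is that you carefully unwind the equality of local dimensions (via cofinality of neighbourhoods inside $U$ and intrinsicness of topological dimension), which the paper states in a single line as $\dim_p W=\dim_p U=\dim_p Z$.
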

\begin{proof}
    Assume the contrary: there is an open subset $U \subset X$ such that $U \cap (W \backslash Z) = ~\varnothing$, i.e. $U \subset Z$. Since $W$ is Jacobson, there is a point $p \in U$ that is closed in $W$. We have $\dim_p W = \dim_p U = \dim_p Z$, which is a contradiction.
\end{proof}

\begin{corollary}[Irrelevant fibres]\label{cor:irrelevant_fibres}
    Let  $W \to Y$ be a morphism of $k$-schemes locally of finite type. Let $Z \subset Y$ be a locally closed subscheme such that for all closed $p \in Z$ and all closed $x \in W_p$ we have
    \[
    \dim_x W_p < \dim_x W - \dim_p Z 
    \]
    Then\footnote{by $W_Z$ we denote the pre-image of $Z$ under the morphism $W\to Y$} $W \backslash W_Z$ is dense in $W$. 
\end{corollary}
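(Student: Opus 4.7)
The plan is to reduce the corollary to the preceding lemma by checking its hypothesis on each closed point of $W_Z$. Since $W$ is locally of finite type over the field $k$, it is Jacobson, so the lemma applies to $W$ and the subset $W_Z$ once we verify that $\dim_x W_Z < \dim_x W$ at every $x \in W_Z$ that is closed in $W$.

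First I would fix such an $x$ and let $p$ denote its image in $Y$. Because $W \to Y$ is a morphism of $k$-schemes locally of finite type, it sends closed points to closed points (the residue field $k(x)$ is a finite extension of $k$, so its image in $k(p)$ forces $k(p)$ to be finite over $k$ as well; since $Y$ is Jacobson this means $p$ is closed in $Y$, hence closed in $Z$). Moreover $x$, being closed in $W$, is closed in the fibre $W_p$.

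Next I would invoke the general upper semicontinuity of fibre dimension for morphisms locally of finite type applied to the induced map $W_Z \to Z$, whose fibre over $p$ coincides with $W_p$. This gives
\[
\dim_x W_Z \;\le\; \dim_x W_p \;+\; \dim_p Z.
\]
Combining with the hypothesis $\dim_x W_p < \dim_x W - \dim_p Z$ yields $\dim_x W_Z < \dim_x W$ at every such $x$, which is exactly what the lemma requires; hence $W \setminus W_Z$ is dense in $W$.

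The main obstacle I anticipate is bookkeeping around the fact that $Z$ is only locally closed and not closed: one must be slightly careful that $\dim_p Z$ refers to the dimension of $Z$ at $p$ as a locally closed subscheme, and that the fibre-dimension inequality above is being applied to the morphism $W_Z \to Z$ (not to $W \to Y$). Once these are pinned down, the rest is a direct plug-in into the preceding lemma.
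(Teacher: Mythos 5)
Your proof is correct and takes essentially the same route as the paper: both apply the fibre-dimension inequality $\dim_x W_Z \le \dim_x (W_Z)_p + \dim_p Z$ for the induced morphism $W_Z \to Z$ at each point of $W_Z$ closed in $W$, and then conclude via the preceding lemma using that $W$ is Jacobson; the paper merely reduces first to the case $Z$ closed in $Y$, whereas you check directly (via residue fields and the Jacobson property) that the image point $p$ is closed. The only quibble is terminological: the inequality you invoke is the fibre-dimension inequality for morphisms locally of finite type (EGA IV 13.1.3), not upper semicontinuity of fibre dimension, though the statement you actually use is the correct one.
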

\begin{proof}
    The question is local on $Y$, so we may assume that $Z$ is closed in $Y$. Now, $W_Z$ is a closed subscheme of $W$, hence the points of $W_Z$ that are closed in $W$ are just the closed points of $W_Z$. Now, we have:
    \[
    \dim_x W_p = \dim_x (W_Z)_p \ge \dim_x W_Z - \dim_p Z.
    \]
     Regrouping the terms and applying the inequality from the assumption we get
    \[
    \dim_x W_Z \le \dim_x W_p + \dim_p Z < \dim_x W.
    \]
    As schemes locally of finite type over a field are Jacobson, we are done.
\end{proof}
\begin{notation}
    For the purposes of this paper we assume that $\dim \varnothing = -\infty$.
\end{notation}
\begin{corollary}[Irreducibility Criterion]\label{cor:irreducibility_criterion}
    Let $W \to Y$ be a dominant morphism of $k$-schemes locally of finite type with equidimensional fibres\footnote{i.e. for any fibre all irreducible components have the same dimension}. Let $Y$ be irreducible with the generic point $\eta$. Assume that the following subsets are locally closed:
    \[
    Y_r \defeq \{ p \in Y\ |\ \dim W_p = \dim W_\eta + r \}
    \]
    Then $W$ is irreducible if and only if all of the following conditions are satisfied:
    \begin{enumerate}
        \item For all closed $p \in W$ we have $\dim_p W \ge \dim W_\eta + \dim Y$;
        \item $W_{Y_0}$ is irreducible;
        \item For all $r > 0$ we have $\dim Y > \dim Y_r + r$.
    \end{enumerate}
\end{corollary}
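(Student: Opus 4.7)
The plan is to combine the Irrelevant Fibres corollary with upper semi-continuity of fibre dimension. A key preliminary observation is that $Y_0$ is open and dense in $Y$: by Chevalley's upper semi-continuity applied to the locally-of-finite-type morphism $W \to Y$, each set $\{p : \dim W_p \ge d\}$ is closed, so $Y_0$ is open, and it is non-empty (and hence dense in the irreducible $Y$) because $\eta \in Y_0$. Since the fibres are equidimensional, for any closed $x \in W_p$ we have $\dim_x W_p = \dim W_p$, which equals $\dim W_\eta + r$ precisely when $p \in Y_r$; this identity will be plugged into the Irrelevant Fibres inequality.

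For the $(\Leftarrow)$ direction, assume (1), (2), (3). I would show that $W_{Y_0}$, which is open in $W$ and irreducible by (2), is dense in $W$, so that $W = \overline{W_{Y_0}}$ is irreducible. To this end, apply \autoref{cor:irrelevant_fibres} to each locally closed subscheme $Y_r$ with $r > 0$: for a closed $p \in Y_r$ and closed $x \in W_p$, condition (1) gives $\dim_x W = \dim W \ge \dim W_\eta + \dim Y$, equidimensionality gives $\dim_x W_p = \dim W_\eta + r$, and $\dim_p Y_r \le \dim Y_r$, so the required hypothesis $\dim_x W_p < \dim_x W - \dim_p Y_r$ becomes precisely condition (3). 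Hence $W \setminus W_{Y_r}$ is dense in $W$ for every $r > 0$, and since only finitely many strata meet any given quasi-compact open of $W$ (a consequence of upper semi-continuity on the Noetherian affine opens of $W$), density of $W_{Y_0}$ follows.

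For the $(\Rightarrow)$ direction, assume $W$ is irreducible. Then $W_{Y_0}$ is a non-empty open subscheme of the irreducible $W$, hence irreducible, establishing (2). For (1): since $W$ is locally of finite type over $k$ it is Jacobson, so $\dim_p W = \dim W$ at every closed $p$, and dominance of $W \to Y$ together with the dimension of the generic fibre yields $\dim W \ge \dim W_\eta + \dim Y$. For (3), fix $r > 0$; since $\eta \in Y_0$, $Y_r$ is a proper subset of $Y$, so $W_{Y_r}$ is a proper subset of the irreducible $W$, forcing $\dim W_{Y_r} < \dim W = \dim Y + \dim W_\eta$. Combined with the bound $\dim W_{Y_r} \le \dim Y_r + \dim W_\eta + r$ coming from equidimensional fibres, one obtains $r + \dim Y_r < \dim Y$.

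The main obstacle I anticipate is the dimension bookkeeping for schemes that are only locally of finite type: one must verify that $\dim_x W = \dim W$ at every closed $x$ of the irreducible $W$, that equidimensionality makes $\dim_x W_p$ constant along each fibre, and that the estimate $\dim W_{Y_r} \le \dim Y_r + \dim W_\eta + r$ is valid without a global finite-type hypothesis. These points are routine once one restricts to affine opens, but they need to be handled carefully, especially in patching together the finitely-many-strata-per-open argument into a global density statement.
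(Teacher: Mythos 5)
Your overall strategy is the same as the paper's (stratify $Y$ by fibre dimension, get density of $W_{Y_0}$ from \autoref{cor:irrelevant_fibres} for sufficiency, use Chevalley semicontinuity for necessity), but your proof of the \emph{necessity} of condition (3) has a genuine gap. First, the inference ``$W_{Y_r}$ is a proper subset of the irreducible $W$, forcing $\dim W_{Y_r} < \dim W$'' is invalid: a proper subset of an irreducible scheme can have full dimension (e.g.\ the complement of a closed point). What actually works is that $W_{Y_r}$ is locally closed (being the preimage of the locally closed $Y_r$) and disjoint from the nonempty open $W_{Y_0}$, hence cannot be dense, hence lies in a proper closed subset of $W$ and only then has strictly smaller dimension --- this is exactly how the paper argues (contrapositively). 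Second, and more seriously, even granting $\dim W_{Y_r} < \dim W = \dim Y + \dim W_\eta$, you combine it with the \emph{upper} bound $\dim W_{Y_r} \le \dim Y_r + \dim W_\eta + r$; two upper bounds on $\dim W_{Y_r}$ yield no inequality between $\dim Y_r + r$ and $\dim Y$. What you need is the \emph{lower} bound $\dim W_{Y_r} \ge \dim Y_r + \dim W_\eta + r$, which does hold because every fibre over a point of $Y_r$ is nonempty of dimension exactly $\dim W_\eta + r$ (the paper uses the equality $\dim W_{Y_r} = \dim Y_r + \dim W_\eta + r$). With that bound your derivation of (3) goes through; as written it does not.

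Two smaller points. Your ``key preliminary observation'' that $Y_0$ is open in $Y$ misapplies Chevalley: upper semicontinuity holds on the source (the sets $\{x \in W : \dim_x W_{f(x)} \ge d\}$ are closed in $W$), while on the base the sets $\{p \in Y : \dim W_p \ge d\}$ are in general only constructible. Fortunately openness of $Y_0$ is not needed: for the necessity of (2) what you need is that $W_{Y_0}$ is open in $W$, which follows from semicontinuity on $W$ plus the fact that for irreducible $W$ dominating $Y$ no nonempty fibre has dimension below $\dim W_\eta$. Also, in the sufficiency direction you only remove the strata with $r > 0$, but $W_{Y_0}$ is the complement of $\bigcup_{r \ne 0} W_{Y_r}$, so the strata with $r < 0$ must be dealt with as well --- either note that the hypothesis of \autoref{cor:irrelevant_fibres} holds for them automatically (since $r + \dim Y_r < \dim Y$ whenever $r < 0$), or that condition (1) forces them to be empty. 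These last two issues are patchable, but the necessity argument for (3) needs the corrected inequality to be a proof.
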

\begin{proof}
    Assume that $W$ is irreducible. Then since $W \to Y$ is dominant, we have that $\dim W_\eta = \dim W - \dim Y$, which gives us condition 1, because irreducible schemes are equidimensional. By the Chevalley upper semi-continuity theorem $W_{Y_0}$ is an open subset of $W$, hence $W_{Y_0}$ is irreducible. Finally, if there is $r > 0$ such that $\dim Y \le \dim Y_r + r$, then we have 
    \[
    \dim W_{Y_r} = \dim Y_r + \dim W_\eta + r \ge \dim W.
    \]
    By the same Chevalley theorem $W_{Y_r}$ is a locally closed subset of $W$. Since $W_{Y_r}$ is of dimension at least $\dim W$ and $W$ is irreducible, we have that $W_{Y_r}$ is dense in $W$. However, $W_{Y_0}$ is a non-empty open subset of $W$ that does not intersect $W_{Y_r}$, which gives us a contradiction.

    Now, assume all the conditions are satisfied. Take any closed $y \in Y_r$ and any closed $p \in W_y$. We have that
    \[
    \dim_p W \ge \dim W_\eta + \dim Y > \dim W_\eta + r + \dim Y_r = \dim W_y + \dim Y_r,
    \]
    so by Irrelevant Fibres \autoref{cor:irrelevant_fibres} $W\backslash W_Z = W_{Y_0}$ is dense in $W$. Since $W_{Y_0}$ is irreducible, so is $W$.
\end{proof}

\subsubsection{Codimension of quasi-subtori}\label{sssec:techincal_codim}
This subsubsection is purely technical: we just prove the claim \ref{claim:prelim_quasisubtori}. The reader may skip this subsubsection until they come across a reference to the said claim in one of the proofs.

\begin{definition}
    Let $L$ be a lattice and $B \subset L$ be a subset. Then by $\dim B$ we denote the rank of the minimal lattice that contains the set $B - B \defeq \{ b - b'\ |\ b, b' \in B \}$.
\end{definition}

\begin{remark}
    $\dim B = \dim \Conv_{L_\R} B$ for any finite $B \subset L$.
\end{remark}

\begin{remark}
     $\sum (B_i - B_i) = \sum B_i - \sum B_i$ for any collection of subsets $B_1, \dots, B_r \subset ~L$.
\end{remark}

\begin{remark}
    Bellow we will be working with the character lattice $M$. The multiplicative notation is more common for characters, so when we write $B - B$ for $B \subset M$ we mean the set $\{ \chi_1 \cdot \chi_2^{-1}\ |\ \chi_1, \chi_2  \in B \}$.
\end{remark}

\begin{lemma}
    Let $B \subset M$ be a subset such that $1 \in B$. Then the subvariety 
    \[
    V \defeq \{ p \in T^n\ | \chi(p) = 1 \ \forall \chi \in B \}
    \]
    is a quasi-torus of codimension $\dim B$ in $T^n$.
\end{lemma}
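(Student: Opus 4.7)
The plan is to reduce the question to the sublattice of $M$ generated by $B$ and then diagonalize via the Smith normal form.

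First I would observe that for any $p\in T^n$ the set $\{\chi\in M:\chi(p)=1\}$ is a subgroup of $M$, because $\chi\mapsto\chi(p)$ is a group homomorphism $M\to k^\times$. Hence --- both set-theoretically and, by the ring identity $\chi_1\chi_2-1=\chi_1(\chi_2-1)+(\chi_1-1)$ applied inductively, scheme-theoretically --- $V$ coincides with the closed subscheme of $T^n$ cut out by $\chi(p)=1$ for $\chi$ ranging over the subgroup $L\subset M$ generated by $B$. Since $1\in B$ we have $B\subset B-B\subset L$, so $L$ is also the subgroup generated by $B-B$; thus $r\defeq\dim B=\rk L$.

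Next, by the elementary divisors theorem applied to $L\subset M$, I would pick a basis $e_1,\dots,e_n$ of $M$ together with positive integers $d_1,\dots,d_r$ such that $d_1e_1,\dots,d_re_r$ is a basis of $L$. Writing $x_i$ for the character $e_i$, we identify $k[T^n]=k[x_1^{\pm1},\dots,x_n^{\pm1}]$, and in these coordinates $L$ is generated as a group by $x_1^{d_1},\dots,x_r^{d_r}$. Therefore
\[
V=\bigl\{p\in T^n:x_1^{d_1}(p)=\cdots=x_r^{d_r}(p)=1\bigr\}
\]
factors as the product of the zero-dimensional group subschemes $\{x_i^{d_i}=1\}\subset T^1$ for $i\le r$ with the free torus factor $T^{n-r}$ in the remaining coordinates. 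This is manifestly a closed diagonalizable subgroup scheme of $T^n$ --- i.e.\ a quasi-torus --- of dimension $n-r$, so its codimension equals $r=\dim B$.

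The only point sensitive to characteristic is that when $\char k$ divides some $d_i$, the factor $\{x_i^{d_i}=1\}\subset T^1$ becomes non-reduced; however it is still a zero-dimensional closed subgroup scheme, so neither the dimension count nor the quasi-torus conclusion is affected. This is really the one tiny obstacle to generalising: one just has to commit to the scheme-theoretic (diagonalizable-group-scheme) reading of ``quasi-torus'' so that the proof goes through uniformly over any base field $k$.
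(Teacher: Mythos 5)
Your proposal is correct and follows essentially the same route as the paper: replace $B$ by the sublattice it generates (using $1\in B$ to identify this with the lattice generated by $B-B$), apply the Smith normal form to choose coordinates in which the sublattice is generated by $x_1^{s_1},\dots,x_r^{s_r}$, and read off that $V$ is a quasi-torus of codimension $r=\dim B$. The extra remarks on the scheme-theoretic reading in small characteristic are a welcome clarification but do not change the argument.
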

\begin{proof}
    Let $H$ be a the sublattice\footnote{$H$ is also the minimal sublattice containing $B - B$ because $1 \in B$. } of $M$ generated by $B$. Clearly $\forall p \in V, \chi \in H$ we have that $\chi(p) = 1$, so we can replace $B$ with $H$. By the Smith Normal Form theorem there is a change of coordinates such that in terms of the new coordinates $x_1, \dots, x_n$ on $T^n$ the lattice $H$ is generated by $x_1^{s_1}, \dots, x_r^{s_r}$, $r = \rk H$, $s_i > 0$. So, $V$ is defined by equations $x_1^{s_1} = \dots = x_r^{s_r} = 1$ --- clearly it is a quasi-torus of codimension $r$.
\end{proof}

\begin{claim}\label{claim:prelim_quasisubtori}
    Let $B_1, \dots, B_r \subset M$ be non-empty subsets. Then the subvariety 
    \[
    V \defeq \{ (p, q) \in T^n \times T^n\ |\ (\chi_2 \cdot \chi_1^{-1}) (p) = (\chi_2 \cdot \chi_1^{-1})(q)\  \forall \chi_1, \chi_2 \in B_i\ \forall i\} 
    \]
    is a quasitorus in $T^n \times T^n$ of codimension $\dim \sum B_i$.
\end{claim}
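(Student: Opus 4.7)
The plan is to reduce the statement to the previous lemma via the group homomorphism
\[
\pi : T^n \times T^n \to T^n, \qquad (p,q) \mapsto p q^{-1}.
\]
The condition $(\chi_2 \chi_1^{-1})(p) = (\chi_2 \chi_1^{-1})(q)$ is equivalent to $(\chi_2 \chi_1^{-1})(pq^{-1}) = 1$, so setting
\[
B \defeq \{1\} \cup \bigcup_{i=1}^r \bigl\{\chi_2 \chi_1^{-1} : \chi_1,\chi_2 \in B_i\bigr\} \subset M,
\]
we have $V = \pi^{-1}(W)$, where $W = \{t \in T^n : \mu(t) = 1 \ \forall\, \mu \in B\}$.

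Next I would invoke the preceding lemma (which requires $1 \in B$, granted here since $\chi\chi^{-1} = 1$) to conclude that $W$ is a quasi-torus in $T^n$ of codimension $\dim B$. Since $\pi$ is a surjective homomorphism of algebraic tori whose fibres are all cosets of the diagonal subtorus (so $n$-dimensional), $V = \pi^{-1}(W)$ is again a subgroup scheme of $T^n \times T^n$, i.e.\ a quasi-torus, and $\codim V = \codim W = \dim B$. One could equally well observe directly that $V$ is cut out in $T^n \times T^n$ by the characters $(p,q) \mapsto \mu(p q^{-1})$ for $\mu \in B$, so that $V$ is a quasi-torus by the same argument as in the previous lemma applied to the lattice $M \oplus M$.

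It remains to identify $\dim B$ with $\dim \sum_i B_i$. The minimal sublattice of $M$ containing $B$ coincides (because $1 \in B$) with the sublattice generated by $\bigcup_i (B_i - B_i)$, which in turn equals the sublattice generated by the Minkowski sum $\sum_i (B_i - B_i)$. By the remark $\sum_i (B_i - B_i) = \sum_i B_i - \sum_i B_i$, so this sublattice has rank $\dim \sum_i B_i$ by definition.

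The only mildly subtle point is the identification of lattices in the last paragraph: one must carefully distinguish the set-theoretic union $\bigcup_i(B_i - B_i)$ from the Minkowski sum $\sum_i(B_i - B_i)$ and check that they generate the same sublattice. Once that combinatorial fact is in place, the proof is just bookkeeping on top of the previous lemma.
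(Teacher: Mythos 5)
Your proof is correct and is essentially the paper's own argument: the paper works on the character side, cutting $V$ out by the antidiagonal characters $\alpha(\chi) = (\chi,\chi^{-1})$ for $\chi \in \sum_i (B_i - B_i)$ and applying the preceding lemma to the lattice $M^2$, which is exactly the pullback of characters along your map $\pi(p,q) = pq^{-1}$ (and is the alternative you yourself mention). The final lattice bookkeeping (the union $\bigcup_i(B_i-B_i)$ and the Minkowski sum $\sum_i(B_i-B_i)$ generate the same sublattice because $1 \in B_i - B_i$) is the same easy verification the paper makes when it passes from the equations indexed by the union to those indexed by the sum, so there is no gap.
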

\begin{proof}
    Consider the antidiagonal embedding $\alpha: M \to M^2, \chi \mapsto (\chi, \chi^{-1})$. Then $V$ is defined by equations $\alpha(\chi)(p, q) = 1 \ \forall \chi \in B_i - B_i\ \forall i$. Clearly $1 \in B_i - B_i$, so $V$ is defined by the equations $\alpha(\chi)(p, q) = 1\ \forall \chi \in \sum (B_i - B_i)$. By the above lemma $V$ is a quasi-torus of codimension $\dim \alpha\left(\sum (B_i - B_i) \right)$. Since $\alpha$ is an embedding, it preserves the dimensions, so
    \[
    \codim V = \dim \sum (B_i - B_i) = \dim \sum B_i.
    \]
\end{proof}

\section{General Sufficient Condition}\label{sec:general}
Recall from \autoref{def:evaluation} that $\E|_\P: \P \times T^n \to \A^m_k \times T^n$ is the evaluation morphism that sends $(\mathbf f, p) \mapsto (\mathbf f(p), p)$ and that $X_\P = \Ker \E|_\P$. Denote by $\E|^2_\P$ the morphism $\P \times T^n \times T^n \to \A^{2m}_k \times T^n \times T^n$, $(\mathbf f, p, q) \mapsto (\mathbf f(p), \mathbf f(q), p, q)$ --- just like $\E|_\P$ it is a vector bundle morphism, i.e. it is fiberwise linear.

\begin{theorem}\label{thm:sufficient}
    Consider the locally closed subsets 
    \[
    \mathcal S_r \defeq \{ (p, q) \in T^n \times T^n\ |\ \rk_{(p, q)} \E|_\P^2 = 2m - r \}.
    \]
    If $\dim \mathcal S_r + r < 2n$ for all $r > 0$, then the general fibre of $X_\P \to \P$ is geometrically irreducible, i.e. the general system from $\P$ defines a geometrically irreducible variety in $T^n$.
\end{theorem}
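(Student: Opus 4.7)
The plan is to apply \autoref{thm:fibre_condition} to the projection $X_\P \to \P$, which reduces the geometric irreducibility of the general fibre to the irreducibility of the fibered square $W \defeq X_\P \times_\P X_\P$. The key identification is that $W$ coincides with $\Ker \E|^2_\P$ as a closed subscheme of $\P \times T^n \times T^n$: indeed, $(\mathbf f, p, q)$ lies in $W$ precisely when $\mathbf f(p) = \mathbf f(q) = 0$. Thus the fibre of $W \to Y \defeq T^n \times T^n$ over $(p,q) \in \mathcal S_r$ is the kernel of a linear map of rank $2m - r$, a vector space of dimension $\dim \P - 2m + r$. By semicontinuity of rank the strata $\mathcal S_r$ are locally closed in $Y$, and they match the sets $Y_r$ appearing in \autoref{cor:irreducibility_criterion} applied to $W \to Y$.

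I would then verify the three hypotheses of that corollary. First, a short warm-up argument shows $\mathcal S_0 \ne \varnothing$: otherwise the generic rank of $\E|^2_\P$ on $Y$ would be $2m - r_1$ for some $r_1 > 0$, so $\mathcal S_{r_1}$ would be open and dense in $Y$ of dimension $2n$, contradicting the hypothesis $\dim \mathcal S_{r_1} + r_1 < 2n$. Hence the generic fibre of $W \to Y$ has dimension $\dim \P - 2m$, and $W|_{\mathcal S_0}$ is a vector bundle over an open subset of the irreducible scheme $Y$, hence irreducible --- this is Condition 2 of the corollary. Condition 1, the bound $\dim_w W \ge \dim \P + 2n - 2m$ at any closed $w \in W$, is Krull's height theorem applied to the fact that $W$ is cut out of $\P \times Y$ by $2m$ linear equations. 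Condition 3 is precisely the hypothesis $\dim \mathcal S_r + r < 2n$ for $r > 0$. The corollary then gives that $W$ is irreducible.

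With $W = X_\P \times_\P X_\P$ irreducible, I conclude via \autoref{thm:fibre_condition} that the general fibre of $X_\P \to \P$ is geometrically irreducible. A minor technical point is that the theorem requires a dominant morphism, whereas a priori the image of $X_\P \to \P$ need not be dense in $\P$. This is straightforward to address: either the image is not dense and the general fibre is empty, so the conclusion holds vacuously; or we replace $\P$ by the closure of the image, which is irreducible because $W$ --- hence $X_\P$ --- surjects onto it. The main conceptual obstacle is less any individual step than the setup itself: correctly identifying $X_\P \times_\P X_\P$ with $\Ker \E|^2_\P$ over $Y$, so that the rank stratification of $\E|^2_\P$ translates into the dimension criterion of \autoref{cor:irreducibility_criterion} and the hypothesis on $\mathcal S_r$ can be read off directly as Condition 3.
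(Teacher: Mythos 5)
Your proposal is correct and follows essentially the same route as the paper's own proof: reduce via \autoref{thm:fibre_condition} to the irreducibility of $X_\P \times_\P X_\P = \Ker \E|^2_\P$, apply \autoref{cor:irreducibility_criterion} to its projection onto $T^n \times T^n$ with $Y_r = \mathcal S_r$, and verify the three conditions exactly as you do (Krull-type bound from the $2m$ equations, the vector-bundle structure over the nonempty open stratum $\mathcal S_0$, and the hypothesis on $\dim \mathcal S_r + r$), with the non-dominant case dismissed as giving empty general fibres. No gaps to report.
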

\begin{proof}
    If $X_\P \to \P$ is not dominant, then the general fibre is empty, in particular it is geometrically irreducible, so from now on we assume that $X_\P \to \P$ is dominant.
    We will show that $X_\P \times_\P X_\P$ is irreducible using corollary \ref{cor:irreducibility_criterion}, which by \autoref{thm:fibre_condition} will imply that the general fibre of $X_\P \to \P$ is geometrically irreducible. Clearly $X_\P \times_\P X_\P = \Ker \E|_\P^2$, so all the fibres of $X_\P \times_\P X_\P \to T^n \times T^n$ are vector spaces, in particular the fibres are equidimensional. 
    
    Since we are interested in the geometric irreducibility, we assume without loss of generality that $k = \bar k$. Take any point $(\mathbf f, p, q)$ from $\P \times T^n \times T^n(k)$ s.t. $\mathbf f(p) = \mathbf f(q) = 0$, i.e. any closed point from $X_\P \times_\P X_\P$. The condition $\E|_\P^2 = 0$ gives no more than $2m$ independent equations near $(\mathbf f, p, q)$, so 
    \[
    \dim_{(\mathbf f, p, q)} X_\P \times_\P X_\P \ge \dim \P \times T^n \times T^n - 2m = \dim \P + 2n - 2m.
    \]
    Clearly $\mathcal S_0$ is open. Since $T^n = \bigsqcup_{r > 0} \mathcal S_r$ and $\dim \mathcal S_r < \dim T^n \times T^n$ for all $r > 0$, we have that $\mathcal S_0 \ne \varnothing$, in particular $\dim \P - 2m$ is the dimension of the generic fibre and $\mathcal S_r$ are precisely the subschemes where the fibre dimension jumps by $r$. Finally, for all $r \ge 0$ the variety $(X_\P \times_\P X_\P)_{\mathcal S_r}$ is a vector bundle over $\mathcal S_r$,  in particular $(X_\P \times_\P X_\P)_{\mathcal S_0}$ is irreducible. By corollary \ref{cor:irreducibility_criterion} $X_\P \times_\P X_\P$ is irreducible.
\end{proof}

\section{Khovanskii Theorems}\label{sec:khovanskii}

The following theorems were proved by Askold Khovanskii in \cite{Khovanskii2016} for $k = \C$. We generalize them to arbitrary field. As before, $M$ is the character (monomial) lattice and for finite subsets $A \subset M$ we define $k^A \defeq \{ f \in \Gamma(T^n_k, \mathcal O)\ | \Supp f \subset A \}$.

\begin{definition}\label{def:khovanskii_condition}
    We say that a collection of subsets $\Delta_1, \dots, \Delta_m \subset M$ satisfy the \textbf{Khovanskii condition} if for any non-empty subset $J \subset \{ 1, \dots, m \}$ we have that\footnote{see \autoref{sssec:techincal_codim} for the definition of dimension} $\dim \sum_{j \in J} \Delta_j > |J|$.
\end{definition}

\begin{theorem}[Irreducibility]\label{thm:khovanskii_irreducibility}
    Let $A_1, \dots, A_m \subset M$ be finite subsets. If $A_1, \dots, A_m$ satisfy the Khovasnkii condition, then the general system from $k^{A_\bullet} = k^{A_1} \times \dots \times k^{A_m}$ defines a geometrically irreducible variety in $T^n$.
\end{theorem}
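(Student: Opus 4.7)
The plan is to apply \autoref{thm:sufficient} directly to $\P = k^{A_\bullet}$, reducing the statement to the verification of $\dim \mathcal S_r + r < 2n$ for every $r > 0$.

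First I would unpack the double evaluation morphism. Since $\P = k^{A_1} \oplus \cdots \oplus k^{A_m}$, the map $\E|^2_\P$ is fibrewise block-diagonal in $m$ blocks: over a closed point $(p, q) \in T^n \times T^n$, the $i$-th block is the $2 \times |A_i|$ matrix whose column indexed by $\chi \in A_i$ is $(\chi(p), \chi(q))^{\mathrm T}$. As every entry $\chi(p), \chi(q)$ is a unit, each block has rank at least $1$, and it has rank exactly $1$ iff all its $2 \times 2$ minors $\chi_1(p)\chi_2(q) - \chi_2(p)\chi_1(q)$ vanish, i.e.\ iff $(\chi_2\chi_1^{-1})(p) = (\chi_2\chi_1^{-1})(q)$ for all $\chi_1, \chi_2 \in A_i$. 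Consequently, letting $J(p,q) \subset \{1,\dots,m\}$ be the set of indices for which the $i$-th block has rank $1$, we obtain $\rk_{(p,q)} \E|^2_\P = 2m - |J(p,q)|$, so a point of $\mathcal S_r$ is exactly one with $|J(p,q)| = r$.

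Next I would cover $\mathcal S_r$ by quasi-subtori. For any $J \subset \{1, \dots, m\}$, define
\[
V_J \defeq \bigl\{ (p, q) \in T^n \times T^n : (\chi_2\chi_1^{-1})(p) = (\chi_2\chi_1^{-1})(q) \ \forall \chi_1, \chi_2 \in A_j, \ \forall j \in J \bigr\}.
\]
By the previous paragraph $\mathcal S_r \subset \bigcup_{|J| = r} V_J$, and \autoref{claim:prelim_quasisubtori} asserts that $V_J$ is a quasi-torus of codimension $\dim \sum_{j \in J} A_j$, so $\dim V_J = 2n - \dim \sum_{j \in J} A_j$. The Khovanskii condition then gives $\dim V_J < 2n - |J|$ for every $J$ with $|J| = r$, hence $\dim \mathcal S_r + r < 2n$, which is exactly the hypothesis of \autoref{thm:sufficient}.

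There is no real obstacle beyond careful bookkeeping: the heavy lifting is done by \autoref{thm:sufficient} on the irreducibility side and by \autoref{claim:prelim_quasisubtori} on the quasi-torus side. The only substantive step is the identification of the rank defect of the block-diagonal matrix $\E|^2_\P$ with the combinatorial index set $J(p,q)$, and even this is just a matter of writing out the matrix explicitly.
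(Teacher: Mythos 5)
Your proof is correct and follows essentially the same route as the paper: apply \autoref{thm:sufficient}, identify the rank defect of the block-diagonal matrix $\E|^2_\P$ with the set of indices where the ratios $\chi_2\chi_1^{-1}$ agree at $p$ and $q$, and bound the resulting loci via \autoref{claim:prelim_quasisubtori} and the Khovanskii condition. The only (harmless) cosmetic difference is that the paper first normalizes $1 \in A_i$ so that ``proportional rows'' becomes ``equal rows,'' whereas you work directly with the vanishing of the $2\times 2$ minors, which is shift-invariant and skips that normalization.
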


\begin{definition}
    For a collection $A_1, \dots, A_m$ and a non-empty indices subset $J \subset ~\{ 1, \dots, m \}$ we define \textbf{the defect of $J$}: $\delta(J) \defeq \dim \left( \sum_{j \in J} A_j \right) - |J|$.
\end{definition}

\begin{theorem}[Irreducible Components]\label{thm:khovanskii_components}
    We use the same notation as in the above theorem and denote by $\Delta_i$ the convex hulls of $A_i$ in $M_\R$. Let $N$ be the number of geometric irreducible components of the variety defined by the general system from $k^{A_\bullet}$ in $T^n$. Then we have the following alternative\footnote{note that for a given indices subset one can compute the defect in two ways: for $A_1, \dots, A_m$ (using dimension introduced in \autoref{sssec:techincal_codim}) and $\Delta_1, \dots, \Delta_m$ (using usual polytope dimension) and they will always coincide}:
    \begin{enumerate}
        \item If $\delta(J) > 0$ for all non-empty $J \subset \{1, \dots, m \}$, then $N = 1$.

        \item If there is a non-empty subset  $J \subset \{1, \dots, m \}$ such that $\delta(J) < 0$, then $N = 0$, meaning that the general system from $k^{A_\bullet}$ has no solutions.

        \item If $\delta(J) \ge 0$ for all $J \subset \{ 1, \dots, m \}$ and for some non-empty subset the defect is zero, then there is the greatest subset $J_0$ such that $\delta(J_0) = 0$ and $N =~\MVol_L (\Delta_j)_{j \in J_0}$, where $L$ is the minimal saturated sublattice of $M$ such that\footnote{i.e. $\chi_i \chi_j^{-1} \in L \ \forall \chi_i \in A_i$, $\chi_j \in A_j$ for any two $i, j \in J_0$.} $A_i - A_j \subset L$ for all $i, j \in J_0$
    \end{enumerate}
\end{theorem}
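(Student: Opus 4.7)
The plan is to handle the three cases separately, with case 3 reducing to case 1, which is exactly \autoref{thm:khovanskii_irreducibility} and gives $N = 1$. For case 2, I would fix $J$ with $\delta(J) < 0$ and, after multiplying each $f_j$ ($j \in J$) by a suitable Laurent monomial (which does not affect the zero locus in $T^n$), assume all $A_j$ for $j \in J$ lie in a saturated sublattice $L \subset M$ of rank $d < |J|$, so each $f_j$ is pulled back from a polynomial on the torus $T_L$ of dimension $d$. A generic system of $|J|$ Laurent polynomials in $d < |J|$ torus variables has no common zero — any $d$ of them cut out a finite set by Kouchnirenko–Bernstein on $T_L$, and an additional generic equation avoids this set — so $N = 0$.

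For case 3, I would first establish the existence of a greatest $J_0 \in \{J : \delta(J) = 0\}$ by showing this family is closed under union. Writing $L_J$ for the saturated sublattice of $M$ generated by $\bigcup_{j \in J}(A_j - A_j)$, one has $L_{J \cup J'} = L_J + L_{J'}$ (up to saturation) and $L_{J \cap J'} \subseteq L_J \cap L_{J'}$, so the standard dimension formula for lattice sums yields the submodular inequality $\dim L_{J \cup J'} + \dim L_{J \cap J'} \leq \dim L_J + \dim L_{J'}$. Combined with $|J \cup J'| + |J \cap J'| = |J| + |J'|$, this gives $\delta(J \cup J') + \delta(J \cap J') \leq \delta(J) + \delta(J')$; under the case 3 hypotheses $\delta \geq 0$ everywhere and $\delta(J) = \delta(J') = 0$, the right-hand side is $0$ and both defects on the left are forced to be $0$ as well.

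Next I would analyse the intermediate variety $V_{J_0} \subset T^n$ cut out by $\{f_j = 0 : j \in J_0\}$ alone. The minimal saturated sublattice $L$ containing $A_i - A_j$ for $i, j \in J_0$ has rank $|J_0|$ and gives a surjection $\pi : T^n \twoheadrightarrow T_L$ with kernel the subtorus $T_{M/L}$ of dimension $n - |J_0|$. After multiplying each $f_j$ ($j \in J_0$) by a suitable monomial, one has $f_j = \pi^* \tilde{f}_j$ for polynomials $\tilde{f}_j$ on $T_L$ whose Newton polytopes are translates of $\Delta_j$. Kouchnirenko–Bernstein on $T_L$ then gives $N = \MVol_L((\Delta_j)_{j \in J_0})$ simple zeros $z_1, \dots, z_N$ of the generic $(\tilde{f}_j)_{j \in J_0}$, so over an algebraic closure $V_{J_0} = \bigsqcup_{i=1}^N \pi^{-1}(z_i)$ is a disjoint union of $N$ geometrically irreducible $T_{M/L}$-torsors.

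Finally I would impose the remaining equations $\{f_j = 0 : j \notin J_0\}$ by restriction to each $\pi^{-1}(z_i) \cong T_{M/L}$: the restriction of $f_j$ is a polynomial with support $\pi(A_j) \subset M/L$, and an easy computation (using that after translation $\sum_{j \in J_0} A_j$ generates $L$) identifies the restricted Khovanskii defect as $\delta_{\mathrm{restr}}(J') = \delta(J_0 \cup J')$, which by maximality of $J_0$ is strictly positive for every non-empty $J' \subseteq \{1, \dots, m\} \setminus J_0$. Applying \autoref{thm:khovanskii_irreducibility} to the restricted system on $T_{M/L}$ would then produce one geometrically irreducible component inside each $\pi^{-1}(z_i)$, yielding $N$ components in total. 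I expect the main obstacle to be the joint genericity step: one needs the $N$ restrictions of a generic $(f_j)_{j \notin J_0}$ to be simultaneously generic on all fibres $\pi^{-1}(z_i)$. I would handle this via the surjectivity of the joint restriction map $k^{A_j} \twoheadrightarrow \prod_{i=1}^N k^{\pi(A_j)}$ at $N$ distinct points of $T_L$ (a Vandermonde-type argument), combined with \autoref{thm:fibre_condition} to pass from the generic to the general fibre.
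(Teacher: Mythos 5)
Your overall strategy is the same as the paper's: case 1 is \autoref{thm:khovanskii_irreducibility}; case 2 reduces the equations indexed by $J$ to a torus of too-small dimension and invokes Kouchnirenko--Bernstein; and in case 3 you obtain $J_0$ from submodularity of the defect, solve the $J_0$-subsystem to get $\MVol_L(\Delta_j)_{j\in J_0}$ cosets of the subtorus dual to $M/L$, restrict the remaining equations to each coset, and check the Khovanskii condition for the images using $\delta(J_0\cup J')>0$. All of this matches the paper's proof step by step.

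The gap is precisely in the step you flag as the main obstacle, and the fix you propose would fail. The joint restriction map $k^{A_j}\to\prod_{i=1}^N k^{\overline{A_j}}$ (writing $\overline{A_j}$ for your $\pi(A_j)$, the image of $A_j$ in $M/L$) is in general not surjective: it decomposes as a direct sum over $\bar\mu\in\overline{A_j}$ of maps $k^{A_j\cap\bar\mu}\to k^N$, $(c_\chi)\mapsto\bigl(\sum_{\chi\mapsto\bar\mu}c_\chi\,\chi(p_i)\bigr)_i$, so surjectivity forces every fibre $A_j\cap\bar\mu$ of $A_j\to M/L$ to contain at least $N$ characters. If $A_j$ maps injectively to $M/L$ --- e.g. $A_1=\{1,x,x^2\}$, $A_2=\{1,y,y^2\}$, $L=\langle x\rangle$, $N=2$, where the restriction of $f_2$ to every coset is literally the same polynomial --- the joint map is nowhere near surjective and no Vandermonde argument applies. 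What is true, and is what you actually need, is weaker: restriction to a \emph{single} coset $k^{A_j}\to k^{\overline{A_j}}$ is a linear surjection (each $\bar\mu$-coefficient is a non-zero functional in the disjoint block of variables $\{c_\chi:\chi\mapsto\bar\mu\}$), so for a fixed generic $(f_j)_{j\in J_0}$ the preimage of the good open set of \autoref{thm:khovanskii_irreducibility} is dense open for each $i$, and the intersection over the $N$ cosets is still dense open; one must then glue these slice-wise statements over the varying $J_0$-part of the system (constructibility of the bad locus), remembering that the points $z_1,\dots,z_N$ themselves move with $(f_j)_{j\in J_0}$. The paper sidesteps this entirely by arguing relatively: it base-changes to the incidence variety $k^{A_\bullet}\times_{k^{A_{\le s}}}X(A_{\le s})$ of pairs (system, chosen solution of the $J_0$-equations), identifies the fibre of $X(A_\bullet)$ over such a pair with a fibre of $X(A_{>s}/L)\to k^{A_{>s}/L}$, and then uses \cite[IV.3, 9.7.8]{EGA} together with generic finiteness of degree $\MVol_L(\Delta_1,\dots,\Delta_s)$ to produce exactly $N$ geometrically irreducible components in the general fibre. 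So your skeleton is correct, but the joint-surjectivity claim must be replaced by the slice/constructibility argument or by the paper's relative formulation.
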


\paragraph{Proof of the Irreducibility theorem}
We will use \autoref{thm:sufficient}. Without loss of generality\footnote{because the characters are invertible over $T^n$, so we can multiply any equation by any character, i.e. shift $A_i$.} $1 \in A_i$ for all $1 \le i \le m$. Then $A_i = \{ 1, \chi^i_1, \dots, \chi^i_{r_i} \}$ and over $(p, q) \in T^n \times T^n$ we have:
\[
\E^2(p, q) =  
    \begin{pmatrix}
        1 & \chi_1^1(p) & \cdots & \chi_{r_1}^1(p) & 0 & 0 & \cdots & 0 & \cdots & 0 & 0 & \cdots & 0 \\
        0 & 0 & \cdots & 0 & 1 &  \chi^2_1(p) & \cdots & \chi^2_{r_2}(p) & \cdots & 0 & 0 & \cdots & 0 \\
        \vdots & \vdots & \ddots & \vdots & \vdots & \vdots & \ddots & \vdots & \ddots & \vdots & \vdots & \ddots & \vdots \\
        0 & 0 & \cdots & 0 & 0 & 0 & \cdots & 0 & \cdots & 1 & \chi^m_1(p) & \cdots & \chi^m_{r_m}(p) \\
        1 & \chi_1^1(q) & \cdots & \chi_{r_1}^1(q) & 0 & 0 & \cdots & 0 & \cdots & 0 & 0 & \cdots & 0 \\
        0 & 0 & \cdots & 0 & 1 &  \chi^2_1(q) & \cdots & \chi^2_{r_2}(q) & \cdots & 0 & 0 & \cdots & 0 \\
        \vdots & \vdots & \ddots & \vdots & \vdots & \vdots & \ddots & \vdots & \ddots & \vdots & \vdots & \ddots & \vdots \\
        0 & 0 & \cdots & 0 & 0 & 0 & \cdots & 0 & \cdots & 1 & \chi^m_1(q) & \cdots & \chi^m_{r_m}(q)
    \end{pmatrix}
\]
Clearly the $i$-th and the $(i + m)$-th rows of this gigantic matrix are proportional if and only if they coincide. Now, for subset $J \subset \{ 1, \dots, m \}$ define the locally closed subschemes\footnote{note that in particular if $\chi(p) = \chi(q)\ \forall \chi \in A_i$ and $i \not \in J$, then $(p, q) \not \in \mathcal S_J$}
\[
\mathcal S_J \defeq \left\{ (p, q) \in T^n \times T^n\ |\ \Big(j \in J \iff \chi(p) = \chi(q)\Big)\forall \chi \in A_j \right\}
\]
Recall that $\mathcal S_r$ is the subset of points of $T^n \times T^n$ where $\rk \E^2$ is $2m - r$. Then we have the decomposition $\mathcal S_r = \bigcup_{|J| = r} \mathcal S_J$. Now we see that the condition that $\dim \mathcal S_r +~r <~2n$ is equivalent to $\codim \mathcal S_J - |J| > 0$. By claim \ref{claim:prelim_quasisubtori} $\codim \mathcal S_J \ge \dim \sum_{j \in J} \Delta_j$.
\qed

\paragraph{Proof of the Irreducible Components Theorem}
The first case is clear. 

The second case. Without loss of generality $\delta(\{1, \dots, s \}) < 0$. After shifting $\Delta_i$ and choosing appropriate coordinates we get that $A_1, \dots, A_s$ are contained in the sublattice generated by $x_1, \dots, x_l$, $l = \dim \sum_j \Delta_j$, in particular $l < s$. If the coefficients are generic enough, then subsystem of first $l$ equations has only finitely many solution, so if we add any non-trivial equation with coefficients generic enough, there will be no solutions. More formally: we can view the subsystem of the first $s$ equations as a square system. Mixed volume of polytopes that are contained in one hyperplane is equal to zero, so by the Bernstein-Kouchnirenko formula the first $s$ equations of the general system have no common solution, so the whole system has no solutions. 

The third case. Note that for any two subsets $J, J' \subset \{1, \dots, m \}$ we have that\footnote{$\dim \sum_{j \in J \cup J'} \Delta_j - |J \cup J'| \le \dim \sum_{j \in J} \Delta_j + \dim \sum_{j \in J'} \Delta_j - \dim \sum_{j \in J' \cap J} \Delta_j - |J \cup J'| = \delta(J) + \delta(J') - \delta(J' \cap J)$} $\delta(J \cup J') \le \delta(J) + \delta(J') - \delta(J \cap J') \le \delta(J) + \delta(J')$ as $\delta \ge 0$. Then $J_0 = \bigcup_{\delta(J) = 0} J$. Wihtout loss of generality $J_0 = \{ 1, \dots, s\}$. After choosing appropriate coordinates $x_1, \dots, x_n$ in torus $T^n$ and shifting $\Delta_i$ we may assume that $\Delta_1, \dots, \Delta_s$ are contained in the sublattice of $M$ generated by $x_1, \dots, x_s$. The idea is as follows. The subsystem of the first $s$ equations must define a finite number (determined by Bernstein-Kouchnirenko) of shifted subtori and each shifted subtorus will contain a single irreducible component. Now denote by $k^{A_{\le q}} \defeq k^{A_1} \times ~\cdots ~\times ~k^{A_q}$, then we have $X(A_{\le s}) \defeq \{ (\mathbf f, p) \in k^{A_{\le s}} \times T^s\ |\ \mathbf{f}(p) = 0 \}$ and the commutative diagram\footnote{everything is well-defined because we shifted $\A_i$ so that $f_i$ depend only on $x_1, \dots, x_s$ for $i < s$.}:
\[
\begin{tikzcd}
    T^n \arrow{r} & T^s \\
    \arrow{u} X(A_\bullet) \arrow{r} \arrow{d} & X(A_{\le s}) \arrow{u} \arrow{d} \\
    k^{A_\bullet} \arrow{r} & k^{A_{\le s}} 
\end{tikzcd}
\]
After all the shifts and the changes of coordinates we have that\footnote{recall that we defined $L$ as the minimal saturated sublattice of $M$ such that $\chi_i \chi_j^{-1} \in L\ \forall \chi \in A_i$, $\chi_j \in A_j\ i, j \in J_0$ } $L$ is generated by $x_1, \dots, x_s$. Let us denote by $A_i/L$ the images of $A_i$ under the projection $M \to M/L$. Then we have the following projection, which is just the evaluation of $x_1, \dots, x_s$ on the given solution of the first $s$ equations:
\[
k^{A_\bullet} \times_{k^{A_{\le s}}} X(A_{\le s}) \to k^{A_{> s}/L}
\]
This morphism in turn gives us the commutative diagram
\[
\begin{tikzcd}
    X(A_\bullet) \arrow{r} \arrow{d} & X(A_{> s}/L) \arrow{d} \\
    k^{A_\bullet} \times_{k^{A_{\le s}}} X(A_{\le s}) \arrow{r} & k^{A_{> s}/L}
\end{tikzcd},
\]
where $X(A_{>s}/L) \defeq \{ (\mathbf f_{> s}, p) \in k^{A_{> s}/L} \times T^{n - s}\ |\ \mathbf f(p) = 0 \}$ and $T^{n - s}$ is the torus corresponding to the lattice $M/L$. Fix a point $(\mathbf f, p_{\le s}) \in k^{A_\bullet} \times_{k^{A_{\le s}}} X(A_{\le s})$ and denote by $\mathbf f_{> s} \in k^{A_{> s}/L}$ its image. Then one can easily see that the induced morphism of fibres $X(A_\bullet)_{(\mathbf f, p_{\le s})} \to X(A_{> s}/L)_{\mathbf f_{> s}}$ is an isomorphism. For any non-empty subset $J \subset \{ s + 1, \dots, m \}$ we have that 
\begin{multline*}
\dim \sum_{j \in J} \Delta_j/L - |J| = \dim \sum_{j \in J \cup J_0} \Delta_j/L - |J| \ge \dim \sum_{j \in J \cup J_0} \Delta_j - |J| - \dim L
= \\
= \dim \sum_{j \in J \cup J_0} \Delta_j - |J| - |J_0| = \delta(J \cup J_0) > 0,
\end{multline*}
because any subset properly containing $J_0$ must be of positive defect. So the general fibre of $X(A_{> s}/L) \to k^{A_{> s}/L}$ is geometrically irreducible. $X(A_{\le s})$ is irreducible and $k^{A_\bullet}$ is a trivial bundle over $k^{A_{\le s}}$, so $ k^{A_\bullet} \times_{k^{A_{\le s}}} X(A_{\le s})$ is irreducible. The generic fibre of $X(A_{> s}/L) \to k^{A_{> s}/L}$ is the same as the generic fibre of $X(A_\bullet) \to k^{A_\bullet} \times_{k^{A_{\le s}}} X(A_{\le s})$, so the general fibre\footnote{'generic' and 'general' are interchangable by \cite[IV.3, 9.7.8]{EGA}} of the latter morphism must be geometrically irreducible. Finally, we have the factorization $X(A_\bullet) \to k^{A_\bullet} \times_{k^{A_{\le s}}} X(A_{\le s}) \to k^{A_\bullet}$. By the Bernstein-Kouchnirenko Formula, $X(A_{\le s}) \to k^{A_{\le s}}$ is a generically finite morphism of degree $\MVol_L(\Delta_1, \dots, \Delta_s)$, so $ k^{A_\bullet} \times_{k^{A_{\le s}}} X(A_{\le s}) \to k^{A_\bullet}$ must be a generically finite morphism of the same degree and the composition $X(A_\bullet) \to k^{A_\bullet}$ has $\MVol_L(\Delta_1, \dots, \Delta_s)$ geometric irreducible components in its general fibre.
\qed

\section{Engineered Complete Intersections}\label{sec:ECI}
In this final section we formulate and prove the most concrete version of our condition, \autoref{thm:ECI}, it is given in \autoref{ssec:ECI_condition}. In \autoref{ssec:ECI_simpler} we give two simpler versions of \autoref{thm:ECI}. In \autoref{ssec:ECI_critical} we give an application of our method by studying some classes of critical loci and Thom-Bordmann strata. Section \ref{ssec:ECI_other} contains two important remarks on how one could use \autoref{thm:ECI}. Finally, in \autoref{ssec:ECI_proof} we prove \autoref{thm:ECI}.

\subsection{Irreducibility Condition}\label{ssec:ECI_condition}
We use the notation from \autoref{sssec:prelim_engineered}, so we have $c^i_1, \dots, c^i_{d_i} \in A_i,\ 1 \le i \le m$ and we study the irreducibility of the variety defined in $T^n$ by the system $c^i_j * f_i = 0$ for the general $f_1 \in k^{A_1}, \dots, f_m \in k^{A_m}$.

\begin{notation}
    For $c \in \Gamma(T^n, \mathcal O)$ and $\chi \in M$ we denote by $c[\chi] \in k$ the unique element of the field such that $c * \chi = c[\chi] \cdot \chi$, i.e. $c[\chi]$ is the coefficient of $c$ with respect to $\chi$.
\end{notation}

\begin{theorem}\label{thm:ECI}
    Let $A \subset M$ be a finite subset, $c_1, \dots, c_d \in k^A$ be polynomials. Consider the map $\mathbf c: A \to k^d$, $\chi \mapsto (c_1[\chi], \dots, c_d [\chi])$. For a complete flag
    \[
    \mathcal V = ~(0 = ~V_0 \subsetneq ~V_1 \subsetneq ~\dots \subsetneq ~V_d = ~k^d)
    \]
    in $k^d$ we define the sets\footnote{in particular, $\Delta_1 (\mathcal V) = \{ \chi \in A\ |\ \mathbf c(\chi) \in V_1\text{ and } \mathbf c(\chi) \ne 0\}$ } $\Delta_i (\mathcal V) \defeq \mathbf c^{-1}(V_i \backslash V_{i - 1})$, $1 \le i \le d$. 

    If there is a complete flag $\mathcal V$ in $k^d$ such that the sets $\Delta_1(\mathcal V), \dots, \Delta_d(\mathcal V)$ satisfy the Khovanskii condition\footnote{\autoref{def:khovanskii_condition}}, then for the general polynomial $f \in k^A$ the variety cut out in $T^n$ by the system $c_1 * f = \cdots = c_d * f = 0$ is geometrically irreducible.
\end{theorem}

\begin{corollary}
    Let $A_1, \dots, A_m \subset M$ be finite subsets, $c^i_1, \dots, c^i_{d_i} \in k^{A_i}$ be polynomials. Consider the maps $\mathbf c^i: A_i \to k^{d_i}$, $\chi \mapsto (c^i_1[\chi], \dots, c^i_{d_i} [\chi])$. For a complete flag $\mathcal V^i =~ (0 = V_0^i \subsetneq V_1^i \subsetneq \dots \subsetneq V^i_{d_i} = k^{d_i})$ in $k^{d_i}$ we define the sets $\Delta_j (\mathcal V^i) \defeq (\mathbf c^i)^{-1}(V^i_j \backslash V^i_{j - 1})$, $1 \le j \le d_i$. 
    
    If there are complete flags $\mathcal V^1, \dots, \mathcal V^m$ such that the sets $\{ \Delta_j(\mathcal V^i) \}^{1 \le i \le m}_{1 \le j \le d_i}$ satisfy the Khovanskii condition, then for the general polynomials $\mathbf f \in k^{A_1} \times \cdots \times k^{A_m}$ the system $c_1^1 * f_1 = \cdots = c^1_{d_1} * f_1 = \cdots = c^m_1 * f_m = \cdots = c^m_{d_m} * f_m = 0$ defines a geometrically irreducible variety in $T^n$.
\end{corollary}
\begin{proof}
    We follow \autoref{rem:prelim_engin_m_ECI_is_ECI}. We can shift $A_i$ by multiplying the equations $c^i_j * f_i$ by characters from $M$ so that $A_1, \dots, A_m$ are disjoint\footnote{we then replace with $A_i$ with $\chi_i \cdot A_i$ and $c_i$ with $\chi_i c_i$}. Put $A \defeq A_1 \sqcup \dots \sqcup A_m$. Denote by $\hat c^i_j$ the images of $c^i_j$ with respect to the natural embeddings $k^{A_i} \xhookrightarrow{} k^A$. Then we can define $\mathbf c: A \to k^{d_1 + \dots + d_m}$, $\chi \mapsto (\hat c^i_j [\chi])^{1 \le i \le m}_{1 \le j \le d_i}$. 
    
    We have that $k^{d_1 + \dots + d_m} = k^{d_1} \oplus \cdots \oplus k^{d_m}$. In particular, the complete flags $\mathcal V^1, \dots, \mathcal V^m$ in $k^{d_1}, \dots, k^{d_m}$ give the flag $\mathcal V \defeq (V_0 \defeq 0 \subsetneq V_1 \subsetneq \dots \subsetneq V_{d_1 + \dots + d_m} )$ in $k^{d_1 + \dots + d_m}$, where $V_{d_1 + \cdots + d_r + i} = V^1_{d_1} \oplus \cdots \oplus V^r_{d_r} \oplus V^{r + 1}_i$ for $1 \le i \le d_{r + 1}$. Moreover, the subsets $\Delta_q (\mathcal V) = \mathbf c^{-1} (V_q \backslash V_{q - 1})$ satisfy the Khovanskii condition if and only if the collection $\{ \Delta_j(\mathcal V^i)\}^{1 \le i \le l}_{1 \le j \le d_i}$ does as 
    \begin{multline*}
    \Delta_{d_1 + \dots + d_r + i}(\mathcal V) = \mathbf c^{-1} (V_{d_1 + \dots + d_r + i}) \backslash \mathbf c^{-1} (V_{d_1 + \dots + d_r + i - 1}) = \\
    = \left((\mathbf c^{r + 1})^{-1} (V^{r + 1}_i) \bigcup_{j = 1}^r (\mathbf c^j)^{-1} (V^j_{d_j}) \right) \backslash \left((\mathbf c^{r + 1})^{-1} (V^{r + 1}_{i - 1}) \bigcup_{j = 1}^r (\mathbf c^j)^{-1} (V^j_{d_j}) \right) = \\
    = (\mathbf c^{r + 1})^{-1}(V^{r + 1}_i) \backslash (\mathbf c^{r + 1})^{-1} (V^{r + 1}_{i - 1}) = \Delta_i(\mathcal V^{r + 1})
    \end{multline*}
    for $1 \le i \le d_{r + 1}$ and $0 \le r < m$.
    
    Therefore we reduced the problem to the case when $m = 1$ and now instead of $A_1, \dots, A_m$ we have one finite set of monomials $A$; instead of polynomials $c^i_j \in k^{A_i}$ we have the polynomials\footnote{$c_{d_1 + \dots + d_r + i} = c^{r + 1}_i$ for $1 \le i \le d_{r + 1}$} $c_1, \dots, c_d \in k^A$, where $d = d_1 + \dots + d_m$; finally, instead of the maps $\mathbf c^1, \dots, \mathbf c^m$ and the complete flags $\mathcal V^1, \dots, \mathcal V^m$ we have one map $\mathbf c: A \to k^d$ and one complete flag $\mathcal V$. Thus, the above theorem applies.
\end{proof}

\subsection{Some Reductions}\label{ssec:ECI_simpler}
In this section we replace the complete flags in the condition \autoref{thm:ECI} with individual fibres and analyse them instead.

\begin{remark}
    Note that if $\Delta_1, \dots, \Delta_d \subset M$ are subsets such that $\dim \Delta_i > d$ for all $i$, then automatically $\Delta_1, \dots, \Delta_d$ satisfy the Khovanskii condition.
\end{remark}

\begin{claim}
    As above, let $A_1, \dots, A_l \subset M$ be finite subsets, $c^i_1, \dots, c^i_{d_i} \in k^{A_i}$ be polynomials. Consider $\pi_i \defeq \mathbb P(\mathbf c^i): A \to \mathbb P(k^{d_i})$, $\chi \mapsto \left[c^i_1[\chi]: \cdots : c^i_{d_i}[\chi]\right]$. If there are points $p^i_1, \dots, p^i_{d_i} \in \mathbb P(k^{d_i})$ in general position such that all the fibres $\{ \pi_i^{-1} (p^i_j) \}^{1 \le i \le m}_{1 \le j \le d_i}$ satisfy the Khovanskii condition, then the system $c^i_j * f = 0$ defines a geometrically irreducible variety for the general $\mathbf f \in k^{A_\bullet}$.
\end{claim}

By \autoref{rem:prelim_engin_m_ECI_is_ECI} it is sufficient to prove the above claim only for $m = 1$, so reformulate the above claim in this case and prove only that.

\begin{claim}\label{claim:ECI_simpler}
    Let $A \subset M$ and $c_1, \dots, c_d \in k^A$ be polynomials. Consider\footnote{the locus where $\pi$ is undefined, i.e. $\chi \in A$ such that $c_i * \chi = 0$ for all $i$ does not affect the system, so we may assume that it is empty} $\pi \defeq \mathbb P(\mathbf c): A \to \mathbb P(k^d)$, $\chi \mapsto \left[ c_1[\chi] : \cdots : c_d[\chi] \right]$. If there are points $p_1, \dots, p_d \in \mathbb P(k^d)$ in general position\footnote{i.e. no $s$ points are contained in a $s - 2$ projective subspace for any $s > 0$.} such that the fibres $\pi^{-1}(p_1), \dots, \pi^{-1}(p_d)$ satisfy the Khovanskii condition, then the system $c_1 * f = \cdots = c_d * f = 0$ defines a geometrically irreducible variety for the general $f \in k^A$.
\end{claim}
\begin{proof}
    Denote by $l_i \subset k^d$ the one-dimensional subspaces corresponding to $p_i \in \mathbb P(k^d)$. Define the subspaces $V_i \defeq l_1 + \cdots + l_d$. Since $p_1, \dots, p_d$ are in general position, $\dim V_i = i$, so $V_1 \subset V_2 \subset \cdots \subset V_d$ is a complete flag. Clearly we have the inclusions $\pi^{-1} (p_i) = ~\mathbf c^{-1} (l_i) \subset \mathbf c^{-1} (V_i \backslash V_{i - 1})$, so $\mathbf c^{-1} (V_i \backslash V_{i - 1})$ satisfy the Khovanskii condition.
\end{proof}

\begin{corollary}\label{cor:ECI_simpler_poly}
    Let $A \subset M$ be a subset, $l : A \to k$ be a function, and $c_i \in k^A$ be polynomials such that $c_i[\chi] = p_i(l(\chi))\ \ \forall \chi \in A$, where $p_i$ is a polynomial of degree $i - 1$. Assume that there are at least $d$ distinct values $v_1, \dots, v_d \in k$ such that the fibres $\phi^{-1}(v_1), \dots, \phi^{-1}(v_d)$ satisfy the Khovanskii condition.
    Then $c_1 * f = \dots = c_d * f = 0$ defines a geometrically irreducible variety in $T^n$ for the general $f \in k^A$.
\end{corollary}
\begin{proof}
    Without loss of generality $A = \Delta_1 \sqcup \Delta_2 \sqcup \cdots \sqcup \Delta_d$. The polynomials $p_1, \dots, p_d$ by definition form a basis of polynomials in one variable of degree $\le d$. Hence, after applying a linear invertible operation on the system $c_1 * f = \cdots = c_d * f = 0$ we could assume that $p_i(x) = x^{i - 1}$. Then $\pi : A \to \mathbb P(k^d)$ takes form $\chi \mapsto [1: \phi(\chi) : \phi(\chi)^2 : \cdots : \phi(\chi)^{d - 1}]$. Since the image of $\pi$ lies on the Veronese curve of degree $d - 1$, any $d$ points from $\pi(\Delta_1 \cup \cdots \cup \Delta_d)$ are in general position. Hence, if there are $d$ distinct fibres that satisfy the Khovanskii condition, then their images are in general position and \autoref{claim:ECI_simpler} applies.
\end{proof}

\begin{remark}
    Everywhere above instead of analysing fibres we could take their subsets that satisfy the Khovanskii condition. It could make great difference e.g. when using \autoref{cor:ECI_simpler_poly}: in fact, we do not need $c_i$ to exhibit polynomial behavior on the whole support set $A$, but only on some sufficiently big subsets of fibres of $\phi$.
\end{remark}

\begin{remark}\label{rem:ECI_simpler_unsimpler}
    Everywhere above we could replace the last point $p_d$ (or the fibre $\phi^{-1}(v_d)$ in \autoref{cor:ECI_simpler_poly}) with the complement of $\{ p_1, \dots, p_{d - 1} \}$ (or the pre-image of the complement of $\{ v_1, \dots, v_{d - 1} \}$) given that the complement is not contained in the span of $p_1, \dots, p_d$ (no additional assumption is needed in \autoref{cor:ECI_simpler_poly}). The proofs will go exactly the same except that in the proof of \autoref{claim:ECI_simpler} we will replace $l_d$ with the whole space $k^d$.
\end{remark}

\subsection{Applying \autoref{thm:ECI}}\label{ssec:ECI_other}
\subsubsection{Verifying combinatorial condition: Exhaustive search}
In this subsection we give an explicit algorithm that allows one to use our sufficient condition \autoref{thm:ECI} to the full extent. This algorithm may be difficult to use manually, but it is perfectly possible to run a computer program for any specific support set $A$ and polynomials $c_1, \dots, c_d \in k^A$ to determine whether the conditions of \autoref{thm:ECI} are satisfied (and in that case for the general $f \in k^A$ the system $c_1 * f = \cdots = c_d * f = 0$ defines a geometrically irreducible variety.).
\begin{enumerate}
    \setcounter{enumi}{-1}
    \item We are given a finite support set $A \subset M$ and polynomials $c_1, \dots, c_d \in k^A$. By \autoref{rem:prelim_engin_m_ECI_is_ECI} our algorithm also applies to the general case with multiple support sets.
    
    \item Choose an order $\prec$ on $A$. Let $A = \{ \chi_1, \dots, \chi_m \}$ such that $\chi_1 \prec \dots \prec \chi_m$.

    \item Define $s_i \defeq \min \{ s\ |\ \dim \langle \mathbf c[\chi_1], \dots, \mathbf c[\chi_s] \rangle = i \}$. Since $c_1, \dots, c_d$ are linearly independent we can find such $s_1, \dots, s_d$. Define the complete flag $\mathcal V$:
    \[
    \mathcal V \defeq (V_1 \subset \cdots \subset V_d), \quad V_i \defeq \langle \mathbf c[\chi_1], \dots, \mathbf c[\chi_{s_i}] \rangle.
    \]
    
    \item If the flag $\mathcal V$ satisfies the conditions of \autoref{thm:ECI} (i.e. $\mathbf c^{-1} (V_i \backslash V_{i - 1})$ satisfy the Khovasnkii condition), then we are done: for the general $f \in k^A$ the system $c_1 * f = \cdots = c * f = 0$ defines a geometrically irreducible variety. If the flag $\mathcal V$ does not satisfy the conditions of \autoref{thm:ECI}, then we choose another order in item 1. and repeat.

    \item If we iterated over all orders on $A$ and never constructed a flag that satisfies the conditions of \autoref{thm:ECI}, then our sufficient condition of irreduciblity is inapplicable.
\end{enumerate}

\subsubsection{Explicit genericity conditions}
We are building on the notion of Engineered Complete Intersection which is developed in \cite{Esterov2024}. In particular, the work contains a sufficient genericity condition for the ECI: for a fixed finite subset $A \subset M$ and polynomials $c_1, \dots, c_d \in \C^A$ for all $f \in \C^A$ such that the system $c_1 * f = \cdots = c_d * f = 0$ is smooth and non-degenerate upon cancellations (\cite[Def. 1.6.2]{Esterov2024}), the varieties defined by these systems are diffeomorphic (\cite[Prop. 4.14.2]{Esterov2024}), a cancellation matrix is given in \cite[Cor. 4.6]{Esterov2024}. As irreducibility is equivalent to connectedness for smooth varieties, we get a sufficient condition of irreducibility for an ECI: if $A, c_1, \dots, c_d$ satisfy the conditions of \autoref{thm:ECI} and $f \in \C^A$ is such that the system $c_1 * f = \cdots = c_d * f = 0$ is a non-degenerate upon cancellations system that defines a smooth variety, then that variety is irreducible.

Such an explicit genericity condition in arbitrary characteristic would require a theory of Engineered Complete Intersections similar to the one developed in \cite{Esterov2024} but over a field of arbitrary characteristic (rather than just $\C$).

\subsection{Critical Loci \& Thom-Brodmann Strata}\label{ssec:ECI_critical}
This section gives a few examples on how one could apply \autoref{thm:ECI} or its corollaries from \autoref{ssec:ECI_simpler}

\begin{notation}
    We denote by $\mathbb F$ the prime subfield of $k$, i.e. $\mathbb F = \Z/p\Z$ if $p = \char k > 0$ and $\mathbb F = \Q$ otherwise.
\end{notation}

\begin{notation}
    Everywhere bellow by degree of a polynomial (with respect to any given coordinate) we will mean not the integer from $\Z$ but its residue class from $\Z/(\char k) \cdot \Z$ (in particular, if $\char k = 0$, then it just the usual integer degree). This way whenever we write $\deg_x f$ we always have that $\deg_x f \in \mathbb F$ and if $\deg_x f = \deg_x g$, then what we really mean is that the actual degrees with respect to $x$ are the same only modulo $\char k$.
\end{notation}

\begin{example}
    Let $A  \subset  \langle x, y_1, \dots, y_n \rangle$ be a finite set. Consider the hyperplane: 
    \[
    H^x_\lambda \defeq \{ \chi \in M\ | \deg_x \chi = \lambda \}.
    \]
    If there is $\lambda \in \mathbb F$ such that $A \cap H^x_\lambda$, $A \backslash H^x_\lambda$ satisfy the Khovanskii condition\footnote{i.e. both of them are at least 2-dimensional and they are not contained in two parallel 2-dimensional planes}, then $f = f'_x = 0$ is geometrically irreducible for the general~$f \in k^A$.
\end{example}
\begin{proof}
    $f = f'_x = 0$ is equivalent to $f = xf'_x = 0$. Consider the polynomials $c_1 = ~\sum_{\chi \in A} ~\chi$ and $c_2 = x c'_{1x} = \sum_{\chi \in A} (\deg_x \chi) \cdot \chi$. We have that $f = c_1 * f$ and $x f'_x = c_2 * f$.
    Now, let $l_\lambda$ be the 1-dimensional subspace of $k^2$ spanned by the vector $(1, \lambda)$. Then $\mathbf c^{-1} (l) = H^x_\lambda$, where $\mathbf c = c_1 \oplus c_2: A \to k^2$. Consider the complete flag $0 \subset l \subset k^2$. By \autoref{thm:ECI} we are done.
\end{proof}

\begin{example}
Let $A \subset \langle x, y, z_1, \dots, y_n \rangle$ be a finite set. Consider the (punctured) planes:
\begin{gather*}
    H^{[x : y]}_0 \defeq \{ \chi \in M\ |\ \deg_x \chi = \deg_y \chi = 0\}; \\
    H^{[x: y]}_{[\lambda : \mu]} \defeq \left\{\chi \in M\ |\ [\deg_x \chi : \deg_y \chi] = [\lambda : \mu ] \right\} \text{ for } [\lambda : \mu] \in \mathbb P(\mathbb F^2).
\end{gather*}
If there is $[\lambda : \mu] \in \mathbb P(\mathbb F^2)$ such that $A \cap H^{[x: y]}_{[\lambda : \mu]}$, $A \backslash (H^{[x:y]}_0 \cup H^{[x: y]}_{[\lambda : \mu]})$ satisfy the Khovanskii condition,
then $f'_x = f'_y = 0$ is geometrically irreducible for the general $f \in k^A$.
\end{example}
\begin{proof}
    Monomials from $H^{[x: y]}_0$ have zero coefficients in both $f'_x, f'_y$, so without loss of generality we may assume that $A \cap H^{[x: y]}_0 = \varnothing$. The system $f'_x = f'_y = 0$ is equivalent to $xf'_x = yf'_y = 0$. Consider the polynomials $c_1 = \sum_{\chi \in A} (\deg_x \chi) \cdot \chi$, $c_2 = \sum_{\chi \in A} (\deg_y \chi) \cdot \chi$.
    Then $xf'_x = c_1 * f$ and $y f'_y = c_2 * f$. Without loss of generality $\deg_x \chi \ne 0$ for all $\chi \in \Delta_1$. Denote by $l \subset k^2$ the one-dimensional subspace of $k^2$ spanned by $(\lambda, \mu)$. Then $\mathbf c^{-1} (l) = H^{[x: y]}_{[\lambda: \mu]}$, where $\mathbf c = c_1 \oplus c_2: A \to k^2$. Consider the complete flag $0 \subset l \subset k^2$. By \autoref{thm:ECI} we are done.
\end{proof}

\begin{example}
    Let $A \subset \langle x, y_1, \dots, y_n \rangle$ be a finite subset. Again, consider the hyperplanes:
    \[
    H^x_\lambda \defeq \{ \chi \in M\ | \deg_x \chi = \lambda \}.
    \]
    If there are $\lambda_1, \dots, \lambda_r \in \mathbb F$ such that $A \cap H^x_{\lambda_1}, \dots, A \cap H^x_{\lambda_r}, A \backslash (H^x_{\lambda_1} \cup \cdots \cup H^x_{\lambda_r})$ satisfy the Khovanskii condition, then the system $f = \frac{\partial}{\partial x} f = \dots = \frac{\partial^r}{\partial x^r} f = 0$ defines a geometrically irreducible variety for the general $f \in k^A$.
\end{example}
\begin{proof}
    First note that over $T^n$ the equation $\frac{\partial^i}{\partial x^i} f = 0$ is equivalent to $x^i \frac{\partial^i}{\partial x^i} f = 0$. Then define $p_i(d) \defeq \frac{d!}{(d - i + 1)!}$ --- clearly these are polynomials in $d$ of degree $i - 1$. Put $c_i \defeq \sum_{\chi \in A} p_i(\deg_x \chi) \cdot \chi$. Then $x^i \frac{\partial^i}{\partial x^i} f = c_i * f$, so we are to study the ECI $c_1 * f = \dots = c_{r + 1} * f = 0$. By \autoref{cor:ECI_simpler_poly} and \autoref{rem:ECI_simpler_unsimpler} this ECI is geometrically irreducible.
\end{proof}



\subsection{Proof of \autoref{thm:ECI}}\label{ssec:ECI_proof}

\begin{proof}
    We will use \autoref{thm:sufficient}. In Step 1 we replace the complete flag $\mathcal V$ with the standard complete flag in $k^d$. Then in Step 2 we show that the matrix of  the vector bundle morphism $\mathcal E|^2_\P$ (cf. bellow) is in nice echelon form if $\mathcal V$ is standard and then it is easier to describe the rank drop locus of $\mathcal E|^2_\P$. Finally, in Step 3 we show that the rank drop locus of $\mathcal E|^2_\P$ is small enough for \autoref{thm:sufficient} to hold precisely when $\Delta_1(\mathcal V), \dots, \Delta_d(\mathcal V)$ satisfy the Khovanskii condition.

    \paragraph{Step 1. Without loss of generality $\mathcal V$ is standard.} In $k^d$ there is the standard basis
    \[
    e_1 = 
    \begin{pmatrix}
        1 \\ 0 \\ \vdots \\ 0
    \end{pmatrix},
    e_2 = 
    \begin{pmatrix}
        0 \\ 1 \\ \vdots \\ 0
    \end{pmatrix},
    \dots, 
    e_d = 
    \begin{pmatrix}
        0 \\ 0 \\ \vdots \\ 1
    \end{pmatrix}.
    \]
    Then by the standard complete flag we mean the flag corresponding to the basis $e_1, \dots, e_n$, i.e. $0 \subset \langle e_1 \rangle \subset \langle e_1, e_2 \rangle \subset \dots \subset \langle e_1, \dots, e_d \rangle$, where $\langle - \rangle$ denotes the linear span. The following paragraph shows that we could assume that $\mathcal V$ is standard. 
    
    Take any matrix $G = (\gamma_{ij}) \in \GL_d (k)$ and put $\tilde c_i \defeq \sum_j \gamma_{ij} c_j$. On one hand, the system $\tilde c_1 * f = \cdots = \tilde c_d * f = 0$ is equivalent to $c_1 * f = \cdots = c_d * f = 0$ for all $f \in k^A$. On the other hand, if we define $\tilde{\mathbf c}: A \to k^d$, $\chi \mapsto (\tilde c_1 [\chi], \dots, \tilde c_d[\chi])$ then $\tilde {\mathbf c} = G \circ \mathbf c$, so for any subspace $W \subset V$ we have $\tilde {\mathbf c}^{-1} (W) = \mathbf c^{-1} (G^{-1} (W))$. Since $\GL_d (k)$ acts transitively on complete flags we can choose $G \in \GL_d (k)$ such that $G^{-1}(\mathcal V)$ is the standard complete flag and replace $c_1, \dots, c_d$ with $\tilde c_1, \dots, \tilde c_d$.

    Henceforth we assume that $\mathcal V$ is standard. In particular for all $\chi_i \in \Delta_i$, we have that $c_j * \chi_i = 0$ for $j > i$ (because $\mathbf c(\chi_i) \in \langle e_1, \dots, e_i \rangle$) and $c_i * \chi_i \ne 0$ (otherwise $\mathbf c(\chi_i) \in \langle e_1, \dots, e_{i - 1} \rangle$). We will also assume that $\mathbf c^{-1} (0) = \varnothing$ as monomials from the kernel affect neither the system $c_1 * f_1 = \dots = c_d * f = 0$, nor the sets $\Delta_1, \dots, \Delta_d$.

    \paragraph{Step 2. Analysing the matrix of $\mathcal E|_\P^2$.} Recall that $\E|_\P^2$ is the vector bundle morphism $k^A \times T^n \times T^n \to \mathbb A^{2d} \times T^n \times T^n$ such that
    \[
    (f, p, q) \mapsto ((c_1 * f) (p), (c_1 * f)(q), \dots, (c_d * f)(p), (c_d * f)(q), p, q).
    \]
    So if we put $A = \{ \chi_1, \dots, \chi_N \}$, then the matrix of $\mathcal E|^2_\P$ is:
    \[
    \begin{pmatrix}
        (c_1 * \chi_1) (p) & (c_1 * \chi_2)(p) & \cdots & (c_1 * \chi_N) (p) \\
        (c_1 * \chi_1) (q) & (c_1 * \chi_2)(q) & \cdots & (c_1 * \chi_N) (q) \\
        (c_2 * \chi_1) (p) & (c_2 * \chi_2)(p) & \cdots & (c_2 * \chi_N) (p) \\
        (c_2 * \chi_1) (q) & (c_2 * \chi_2)(q) & \cdots & (c_2 * \chi_N) (q) \\
        \vdots & \vdots & \ddots & \vdots & \vdots \\
        (c_d * \chi_1) (p) & (c_d * \chi_2)(p) & \cdots & (c_d * \chi_N) (p) \\
        (c_d * \chi_1) (q) & (c_d * \chi_2)(q) & \cdots & (c_d * \chi_N) (q) \\
    \end{pmatrix}.
    \]
    If we show that the rank of the above matrix drops by $r$ only at subvarieties of codimension at least $r + 1$ and the general rank is $2d$, then by \autoref{thm:sufficient} our theorem will be proved. Now $\Delta_1, \dots, \Delta_d$ give a partition\footnote{here we use the assumption that $\mathbf c^{-1} (0) = \varnothing$} of $A$. Let us denote $\Delta_i = \{ \chi^i_1, \dots, \chi^i_{N_i} \}$. By the last[CHANGE TO FORMULA REFERENCE] paragraph of the above step we have that $c_i * \chi^j_t = 0$ for $j > i$. Hence, the matrix of $\mathcal E|^2_\P$ takes form:
    \[
    \begin{pmatrix}
        (c_1 * \chi^1_1) (p) & \cdots & (c_1 * \chi^1_{N_1}) (p) & (c_1 * \chi^2_1) (p) & \cdots &  (c_1 * \chi^d_{N_d}) (p) \\
        (c_1 * \chi^1_1) (q) & \cdots & (c_1 * \chi^1_{N_1}) (q) & (c_1 * \chi^2_1) (q) & \cdots &  (c_1 * \chi^d_{N_d}) (q) \\
        0 & \cdots & 0 & (c_2 * \chi^2_1) (p) & \cdots &  (c_2 * \chi^d_{N_d}) (p) \\
        0 & \cdots & 0 & (c_2 * \chi^2_1) (q) & \cdots &  (c_2 * \chi^d_{N_d}) (q) \\
        \vdots & \vdots & \ddots & \vdots & \ddots & \vdots \\
        0 & \cdots & 0 & 0 & \cdots &  (c_d * \chi^d_{N_d}) (p) \\
        0 & \cdots & 0 & 0 & \cdots &  (c_d * \chi^d_{N_d}) (q) \\
    \end{pmatrix}
    \]
    In particular, we have that
    \[
    \rk_{(p, q)} \E|^2_\P \ge \sum_{i = 1}^d \rk
    \begin{pmatrix}
        (c_i * \chi^i_1) (p) & \cdots & (c_i * \chi^i_{N_i}) (p) \\
        (c_i * \chi^i_1) (q) & \cdots & (c_i * \chi^i_{N_i}) (q)
    \end{pmatrix}.
    \]
    Since $c_i * \chi^i_j \ne 0$ (by the last paragraph of the above step[CHANGE TO FORMULA REFRENECE]), $c_i * \chi^i_j$ do not vanish on $T^n \times T^n$, so the above sum is at least $d$ and $\rk \mathcal E \ge d$ everywhere on $T^n \times T^n$.

    \paragraph{Step 3. Codimension of $\mathcal S_r$.} Recall that by $\mathcal S_r$ we denote the following subschemes: 
    \[
    \mathcal S_r = \{ (p, q) \in T^n \times T^n\ |\ \rk_{(p, q)} \E^2|_\P = 2d - r \}.
    \]
    To finish the proof using \autoref{thm:sufficient} we need to show that $\dim \mathcal S_r + r < 2n$ for all $r > 0$. Note that the last sentence of the above paragraph tells us that $\mathcal S_r = \varnothing$ for $r > d$, so $\dim \mathcal S_r = -\infty$ and we only need to tackle the case when $r \le d$. For a subset of characters $\Delta = \{ \chi_1, \dots, \chi_t \} \subset A$ and $p \in T^n$ denote by $\Delta(p)$ the vector $(\chi_1 (p), \dots, \chi_t(p)) \in k(p)^t$. Now, for each non-empty subset $\Delta \subset A$ consider the closed subschemes\footnote{here by $k(p, q)$ we mean the residue field of the point $(p, q) \in T^n$}
    \[
    \mathcal S_\Delta \defeq \{ (p, q) \in T^n \times T^n\ |\ \exists \lambda \in k(p, q)^\times:\  \Delta(p) = \lambda \Delta(q) \},
    \]
    i.e. if $(p, q) \in \mathcal S_\Delta$, then $(\chi_1 \cdot \chi_2^{-1}) (p) = (\chi_1 \cdot \chi_2^{-1}) (q)$ for all $\chi_1, \chi_2 \in \Delta$. Then for a subset of indicies $J \subset \{1, \dots, d\}$ we define\footnote{recall that $\Delta_j = \mathbf c^{-1} (V_j \backslash V_{j - 1})$} $\mathcal S_J \defeq \bigcap_{j \in J} \mathcal S_{\Delta_j}$. Now we will show that $\mathcal S_r \subset \bigcup_{|J| = r} \mathcal S_J$ and that the Khovanskii condition implies $\dim \mathcal S_J + |J| < 2n$. 

    For $(p, q) \in T^n \times T^n$ define the set of indicies 
    \[
    I(p, q) \defeq \{ i \in \{1, \dots, d\}\ |\ (p, q) \not \in \mathcal S_{\Delta_i} \}.
    \]
    If $(p, q) \not \in \mathcal S_J$ for all $|J| = r$, then\footnote{indeed: $(p, q) \in \Delta_j$ for all $j \not \in I(p, q)$, so $(p, q) \in \mathcal S_J$ for $J = \{ 1, \dots, d \} \backslash I(p, q)$ As $|J| < r$, we get $I(p, q) \ge d - r + 1$} $|I(p, q)| \ge d - r + 1$. If $(p, q) \not \in \mathcal S_{\Delta_i}$, then there are $\chi^i_t, \chi^i_s \in \Delta_i$, $t \ne s$, such that $(\chi^i_t(p), \chi^i_s(p))$ is not proportional to $(\chi^i_t(q), \chi^i_s(q))$, i.e.
    \[
    \rk 
    \begin{pmatrix}
        \chi^i_t(p) & \chi^i_s(p) \\
        \chi^i_t(q) & \chi^i_s(q)
    \end{pmatrix} = 2
    \]
    and therefore
    \[
    \rk
    \begin{pmatrix}
        (c_i * \chi^i_t) (p) & (c_i * \chi^i_t) (p) \\
        (c_i * \chi^i_t) (q) & (c_i * \chi^i_s) (q)
    \end{pmatrix}
    =
    \rk \left(
    \begin{pmatrix}
        \chi^i_t(p) & \chi^i_s(p) \\
        \chi^i_s(q) & \chi^i_s(q)
    \end{pmatrix}
    \cdot
    \begin{pmatrix}
        c_i[\chi^i_t] & 0 \\
        0 & c_i[\chi^i_s]
    \end{pmatrix}
    \right) = 
    \rk 
    \begin{pmatrix}
        \chi^i_t(p) & \chi^i_s(p) \\
        \chi^i_t(q) & \chi^i_s(q)
    \end{pmatrix} = 2,
    \]
    the latter equality holds because $c_i[\chi^i_t], c_i[\chi^i_s] \ne 0$. Now recall from the previous paragraph that
    \[
    \rk_{(p, q)} \E^2|_\P \ge \sum_{i = 1}^d \rk
    \begin{pmatrix}
        (c_i * \chi^i_1) (p) & \cdots & (c_i * \chi^i_{N_i}) (p) \\
        (c_i * \chi^i_1) (q) & \cdots & (c_i * \chi^i_{N_i}) (q)
    \end{pmatrix}
    \]
    So we have that\footnote{
    $\begin{pmatrix}
        \chi^i_t(p) & \chi^i_s(p) \\
        \chi^i_t(q) & \chi^i_s(q)
    \end{pmatrix}$ is a submatrix of $\begin{pmatrix}
        (c_i * \chi^i_1) (p) & \cdots & (c_i * \chi^i_{N_i}) (p) \\
        (c_i * \chi^i_1) (q) & \cdots & (c_i * \chi^i_{N_i}) (q)
    \end{pmatrix}$. We also use that that each summand is at least 1 as all the matrices are non-zero
    }
    \begin{multline*}
    \rk_{(p, q)} \E^2|_\P \ge \sum_{i \in I}
    \begin{pmatrix}
        (c_i * \chi^i_1) (p) & \cdots & (c_i * \chi^i_{N_i}) (p) \\
        (c_i * \chi^i_1) (q) & \cdots & (c_i * \chi^i_{N_i}) (q)
    \end{pmatrix} + \\ +
    \sum_{j \in \{ 1, \dots, d \} \backslash I}
    \begin{pmatrix}
        (c_j * \chi^j_1) (p) & \cdots & (c_j * \chi^i_{N_j}) (p) \\
        (c_j * \chi^j_1) (q) & \cdots & (c_j * \chi^j_{N_j}) (q)
    \end{pmatrix} \ge 2 |I| + d - |I| = |I| + d \ge 2d - r + 1 > 2d - r.
    \end{multline*}
    Hence, $(p, q) \not \in \mathcal S_r$, i.e. we showed that $\mathcal S_r$ is contained in $\bigcup_{|J| = r} \mathcal S_J$.

    Finally, we have that $\dim \mathcal S_r \le \dim \bigcup_{|J| = r} \mathcal S_J = \max_{|J| = r} \dim \mathcal S_J$. By \autoref{claim:prelim_quasisubtori} $\mathcal S_J$ has codimension $\dim \sum_{j \in J} \Delta_j$. Therefore by the Khovasnkii condition we have that $\dim \mathcal S_J + |J| = 2n - \dim \sum_{j \in J} \Delta_j + |J| < 2n$ and 
    \[
    \dim \mathcal S_r + r \le \max_{|J| = r} \dim \mathcal S_J + r = \max_{|J| = r} (\dim \mathcal S_J + |J|) < 2n. 
    \]
    So \autoref{thm:sufficient} holds which proves \autoref{thm:ECI}.
\end{proof}

\bibliographystyle{bibstyle}
\bibliography{bibliography} 

\end{document}